  \renewcommand\appendix{\par
  \setcounter{section}{0}
  \setcounter{sub
  section}{0}
  \setcounter{figure}{0}
  \setcounter{table}{0}
  \renewcommand\thesection{ Appendix \Alph{section}}
  \renewcommand\thefigure{\Alph{section}\arabic{figure}}
  \renewcommand\thetable{\Alph{section}\arabic{table}}
}
\tikzstyle{mybox} = [draw=black, fill=white,  thick,
\tikzstyle{mybox} = [draw=black, fill=white,  thick,
\newtheorem{corollary}{Corollary}[section]
\newtheorem{definition}{Definition}[section]
\newtheorem{lemma}{Lemma}[section]
\newtheorem{theorem}{Theorem}[section]
\newtheorem{remark}{Remark}[section]
\newtheorem{example}{Example{\rm}}[section]
\begin{document}

\title{On Semi-Invariants of a Matrix}
\author{Amir Jafari\\Amin Najafi Amin}
\address{Department of Mathematical Sciences, Sharif University of Technology, Tehran, Iran}
\email{amirjafa@gmail.com, amin najafiamin@yahoo.com}
\maketitle
\begin{abstract}
For an algebraically closed field $K$ of characteristic zero and a non-singular matrix $A\in \mbox{GL}_n(K)$, a semi-invariant polynomial of $A$ is defined to be a polynomial $p(a)=p(a_0,\dots,a_{n-1})$ with coefficients in $K$ such that $p(Aa)=\lambda p(a)$ for some $\lambda \in K$. In this article, we classify all semi-invariant polynomials of $A$ in terms of a canonically constructed basis that will be made precise in the text.


\end{abstract}

{\small \textbf{Keywords:} semi-invariants, invariant theory, Jordan blocks, $U$-invariants } \\
\indent {\small \textbf{AMS subject classification: 15A72, 16W22}}

\section{Introduction}\label{sec1}

Let $K$ be a field and $n\ge 1$ be an integer. We let $\mbox{GL}_n(K)$ denote the group of invertible $n\times n$ matrices with entries in $K$. One important problem in
invariant theory (see \cite{H} and \cite{book}) is the problem 
of finding all polynomials $p(a)=p(a_0,\dots,a_{n-1})$ with coefficients in $K$, that are invariant under a given subgroup $G$ of $\mbox{GL}_n(K)$, that is for all $A\in G$
$$p(Aa)=p(a).$$
In this note, we relax this condition and try to find all semi-invariant polynomials $p(a)$ that for all $A\in G$, satisfy
$$p(Aa)=\lambda(A)p(a)$$
where $\lambda(A)\in K$ and is called the multiplier of $p$.
Unlike the invariants that form a $K$-algebra, semi-invariants are closed only under multiplication and the sum of two semi-invariants is a semi-invariant only if their corresponding multipliers are equal.

We only consider the case when $G$ is a cyclic group generated by a single invertible matrix $A$. In this case, we call a semi-invariant polynomial of the cyclic group generated by $A$, a semi-invariant polynomial of $A$. When $K$ is algebraically closed and of characteristic zero, we prove that the semi-invariants, in this case, are in a certain sense, finitely generated and we provide a set of explicitly constructed generators. As was mentioned before, the set of semi-invariants is not an algebra, so this statement must be made precise. In this introduction, we give some key examples to motivate our problem, and then give a precise statement of our result.

Since the homogeneous components of a semi-invariant polynomial are semi-invariant, we may restrict ourselves only to homogeneous polynomials that are simply called forms. The first example reveals the relation between semi-invariant forms and eigenvalues and eigenvectors of $A$.

\begin{example}\label{ex1}

A degree one form $p(a)=c_0a_0+\dots+c_{n-1}a_{n-1}$ is a semi-invariant of $A$ with multiplier $\lambda$, if and only if the row vector $c$, with entries $c_0,\dots, c_{n-1}$ is a (left) eigenvector of $A$ with eigenvalue $\lambda$, that is $cA=\lambda c$. If $A$ is diagonalizable, then we can find $n$ independent eigenvectors that give rise to $n$ independent semi-invariant forms of degree one, $p_1(a),\dots, p_n(a)$ with corresponding multipliers $\lambda_1,\dots, \lambda_n$. It is easy to see that in this case these semi-invariants generate all semi-invariants in the following sense. For a multi-subset (i.e. repetition of elements is allowed) $I$ of $\{1,\dots, n\}$ denote $p_I(a)=\prod_{i\in I}p_i(a)$ and $\lambda_I=\prod_{i\in I}\lambda_i$. Then we have
\begin{theorem}\label{thm11}
Any semi-invariant polynomial $p$ with multiplier $\lambda$ for a diagonalizable matrix $A$ with corresponding semi-invariant forms $p_1,\dots, p_n$ of degree one is of the form
$$\sum c_I p_I$$
where $I$ runs over all multi-subsets of $\{1,\dots, n\}$ with $\lambda_I=\lambda$ and $c_I\in K$ is non-zero only for finitely many $I$.
\end{theorem}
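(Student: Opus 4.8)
The plan is to perform an invertible linear change of coordinates that diagonalizes the action of $A$ on the polynomial ring, after which the semi-invariance condition decouples monomial by monomial.

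First I would assemble the $n$ independent left eigenvectors into an invertible matrix $C$ whose $i$-th row is the eigenvector defining $p_i$, so that $CA = DC$ with $D = \mathrm{diag}(\lambda_1,\dots,\lambda_n)$. Setting $b = Ca$ introduces new coordinates in which $p_i(a) = b_i$, and the substitution $a \mapsto Aa$ becomes $b \mapsto Db$: indeed $C(Aa) = (CA)a = (DC)a = D(Ca) = Db$, so that $b_i \mapsto \lambda_i b_i$. Writing any polynomial $p$ as $p(a) = q(b)$ with $q = p \circ C^{-1}$, the invertibility of $C$ shows that $p$ is a semi-invariant of $A$ with multiplier $\lambda$ if and only if $q(Db) = \lambda q(b)$, and the two descriptions share the same multiplier.

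Next I would expand $q$ in the monomial basis, $q(b) = \sum_\alpha c_\alpha b^\alpha$ over multi-indices $\alpha = (\alpha_1,\dots,\alpha_n)$, and observe that each monomial is an eigenvector of the scaling action: $b^\alpha \mapsto \lambda^\alpha b^\alpha$ where $\lambda^\alpha = \lambda_1^{\alpha_1}\cdots\lambda_n^{\alpha_n}$. Hence $q(Db) = \sum_\alpha c_\alpha \lambda^\alpha b^\alpha$, and since the monomials are linearly independent, the condition $q(Db) = \lambda q(b)$ forces $c_\alpha(\lambda^\alpha - \lambda) = 0$ for every $\alpha$, so $c_\alpha = 0$ unless $\lambda^\alpha = \lambda$.

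Finally I would translate back: a multi-index $\alpha$ corresponds to the multi-subset $I$ in which $i$ appears $\alpha_i$ times, and under this correspondence $b^\alpha = \prod_{i \in I} p_i(a) = p_I(a)$ while $\lambda^\alpha = \lambda_I$. The surviving terms are exactly those with $\lambda_I = \lambda$, yielding $p = \sum_{I : \lambda_I = \lambda} c_I p_I$ with only finitely many nonzero $c_I$ since $p$ has finitely many monomials. The argument is essentially bookkeeping; the only point genuinely requiring care is confirming that the change of coordinates is a bijection on polynomials that preserves the multiplier, which rests entirely on the linear independence of the eigenvectors guaranteed by diagonalizability.
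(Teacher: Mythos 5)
Your proof is correct. The paper in fact states this theorem without proof (it is introduced in Example \ref{ex1} with only the remark that ``it is easy to see''), and your argument --- pass to coordinates $b=Ca$ that diagonalize the action, expand in monomials, and use linear independence of monomials to force $c_\alpha=0$ unless $\lambda^\alpha=\lambda$ --- is exactly the standard argument the authors evidently had in mind, and it also yields the uniqueness of the coefficients $c_I$, since the $p_I$ are images of distinct monomials under an invertible linear substitution and hence linearly independent.
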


\end{example}
When $A$ is not diagonalizable, these linear forms are not enough to generate all semi-invariant forms of $A$ in general, and some interesting semi-invariant quadratic or cubic forms may be needed. We now explain them. Let $J_{n,\lambda}$ denote the $n\times n$ (multiplicative) Jordan block, with $\lambda$ on its main diagonal, $\lambda$ on its off-diagonal right below it, and zero everywhere else. For example $J_{4, \lambda}$ is the following matrix.
\[J_{4,\lambda}= \left[ \begin{array}{cccc}
\lambda&0&0&0\\
\lambda&\lambda&0&0\\
0&\lambda&\lambda&0\\
0&0&\lambda&\lambda
\end{array}\right]\]
\begin{example}\label{ex2}
 If $A=J_{3, \lambda}$ then other than the semi-invariant polynomial $p_1(a_0,a_1,a_2)=a_0$ with multiplier $\lambda$ obtained from the only eigenvector of $A$ as in Example \ref{ex1}, the polynomial
$$p_2(a_0,a_1,a_2)=-a_1^2+2a_2a_0+a_1a_0$$
is a semi-invariant form with multiplier $\lambda^2$.
\end{example}
\begin{example}\label{ex3}
 If $A=J_{4, \lambda}$ then other than the linear and quadratic semi-invariant forms $p_1=a_0$ and $p_2=-a_1^2+2a_2a_0+a_1a_0$ from Example \ref{ex1} and Example \ref{ex2}, the cubic form
$$p_3(a_0,a_1,a_2,a_3)=-a_1^3+3a_2a_1a_0-3a_3a_0^2-2a_2a_0^2+a_1^2a_0$$
is a semi-invariant form with multiplier $\lambda^3$.
\end{example}
\begin{example}\label{ex4}
 If $A=\mbox{diag}(J_{2, \lambda_1},J_{2, \lambda_2})$ is a square matrix of size $4$ with two Jordan blocks, then other than the linear semi-invariant polynomials $p_1=a_0$ and $p_2=a_2$ with multipliers $\lambda_1$ and $\lambda_2$, the quadratic form $p_3(a_0,a_1,a_2,a_3)=a_0a_3-a_1a_2$ is a semi-invariant form with multiplier $\lambda_1\lambda_2$.
 
 \end{example}
\begin{example}\label{ex5}
 In Example \ref{ex3} above, any polynomial $p$ of the form
$$\sum c_{i_1,i_2,i_3}p_1^{i_1}p_2^{i_2}p_3^{i_3}$$
where $i_1+2i_2+3i_3$ is a fixed integer $k$ is a semi-invariant form of degree $k$ and multiplier $\lambda^k$. One might conjecture that these are all such semi-invariant forms for $A$, which is in fact wrong. The following degree $4$ form $p$ given by
$$
-3a_2^2a_1^2+6a_3a_1^3+8a_2^3a_0-18a_3a_2a_1a_0+9a_3^2a_0^2
$$
$$
+3a_2a_1^3-6a_2^2a_1a_0-9a_3a_1^2a_0+18a_3a_2a_0^2
$$
$$
-5a_2a_1^2a_0+8a_2^2a_0^2+3a_3a_1a_0^2
$$
$$
+2a_2a_1a_0^2
$$
is a semi-invariant form with multiplier $\lambda^4$. However it is easy to see that it can not be written as a polynomial in terms of $p_1,p_2$ and $p_3$ . Nevertheless, it can be written as a rational function
$$p=\frac{p_2^3-p_1p_2p_3+p_3^2}{a_0^2}.$$
\end{example}

Our goal is to generalize these examples. Let $A$ be an invertible matrix, given in its Jordan normal form
$$A=\mbox{diag}(J_{n_1,\lambda_1},\dots, J_{n_k,\lambda_k})$$
with $n_1\ge \dots \ge n_l\ge 2> n_{l+1}=\dots=n_k=1$, where we may assume that $l>0$, since the case of a diagonalizable matrix was handled in Theorem \ref{thm11}. We take the variables of our polynomials as 
$$(a_{0,1},\dots, a_{n_1-1,1},a_{0,2},\dots, a_{n_2-1,2},\dots, a_{0,k},\dots, a_{n_k-1,k}).$$

 Then from Example \ref{ex1}. we have $k$ semi-invariant linear forms
 $a_{0,1}, a_{0,2},\dots, a_{0,k}$, with multipliers $\lambda_1,\dots,\lambda_k$ respectively. From a generalization of Examples \ref{ex2} and \ref{ex3}, for each $J_{n_i,\lambda_i}$ with $n_i\ge 3$, we construct an extra $n_i-2$ quadratic and cubic semi-invariant forms with multipliers $\lambda_i^2$ and $\lambda_i^3$ respectively. Also from a generalization of Example \ref{ex4}, for each pair $J_{n_i,\lambda_i}$ and $J_{n_{i+1},\lambda_{i+1}}$ for $i=1,\dots, l-1$, we construct $l-1$ quadratic semi-invariant forms $a_{0,i}a_{1,i+1}-a_{1,i}a_{0,i+1}$ with multipliers $\lambda_i\lambda_{i+1}$. So together, we have $n-1$ linear, quadratic or cubic forms, say $p_1,\dots, p_{n-1}$ with multipliers $\mu_1,\dots, \mu_{n-1}$ respectively. Our main result is the following theorem.
 
 \begin{theorem} \label{main}
 If the field $K$ is of characteristic zero, then the above mentioned semi-invariant forms $p_1,\dots, p_{n-1}$ generate all semi-invariant forms in the following sense. Any semi-invariant polynomial for $A$ with multiplier $\lambda$ can be uniquely expressed as a rational function
 $$\frac{\sum_{I} c_I p_I}{\prod_{i=1}^{l} a_{0,i}^{m_i}}$$
 where $I$ runs over all multi-subsets of $\{1,\dots, n-1\}$ such that $\prod_{i\in I}\mu_i=\lambda\prod_{i=1}^l \lambda_i^{m_i}$, $p_I=\prod_{i\in I}p_i$ and $c_I\in K$ is non-zero only for finitely many $I$.
 \end{theorem}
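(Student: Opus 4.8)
The plan is to pass to a localization in which the action of $A$ becomes transparent, separating the ``scaling'' directions (the eigenvector coordinates $a_{0,i}$) from a single unipotent action on the remaining ratio coordinates. First I would invert the linear forms $a_{0,1},\dots,a_{0,l}$ attached to the non-trivial blocks and work in $R=K[a][a_{0,1}^{-1},\dots,a_{0,l}^{-1}]$. For each $i\le l$ and $1\le j\le n_i-1$ set $b_{j,i}=a_{j,i}/a_{0,i}$, so that $R=K[a_{0,1}^{\pm1},\dots,a_{0,l}^{\pm1},a_{0,l+1},\dots,a_{0,k},\{b_{j,i}\}]$. A direct computation with the Jordan block shows that $A$ multiplies each $a_{0,i}$ by $\lambda_i$ and acts on the ratios by the unipotent automorphism $\sigma$ given by $\sigma(b_{j,i})=b_{j,i}+b_{j-1,i}$ with the convention $b_{0,i}=1$. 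Writing a semi-invariant $f$ of multiplier $\lambda$ as a Laurent expansion $f=\sum_{\mathbf m}(\prod_i a_{0,i}^{m_i})\,g_{\mathbf m}$ with $g_{\mathbf m}\in K[\{b_{j,i}\}]$, the identity $A\cdot f=\lambda f$ forces $\sigma(g_{\mathbf m})=\lambda(\prod_i\lambda_i^{m_i})^{-1}g_{\mathbf m}$ for each $\mathbf m$. Since $\sigma$ is unipotent on every degree-filtered piece of $K[\{b_{j,i}\}]$, its only eigenvalue is $1$; hence each nonzero $g_{\mathbf m}$ is a genuine $\sigma$-invariant and its index must satisfy $\prod_i\lambda_i^{m_i}=\lambda$. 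This reduces the problem to determining the invariant ring $K[\{b_{j,i}\}]^{\sigma}$.

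The crux is to show that $K[\{b_{j,i}\}]^{\sigma}$ is the polynomial ring generated by the normalized forms. Here I would use that $\sigma(b_{1,1})=b_{1,1}+1$, so $b_{1,1}$ is a global slice for the action. The within-block forms from the generalizations of Examples~\ref{ex2} and \ref{ex3}, after dividing by the appropriate power of $a_{0,i}$, become invariants $q_{j,i}=p_{j,i}/a_{0,i}^{\deg p_{j,i}}$ depending only on block $i$, while the cross forms become $r_i=(a_{0,i}a_{1,i+1}-a_{1,i}a_{0,i+1})/(a_{0,i}a_{0,i+1})=b_{1,i+1}-b_{1,i}$; all of these are checked to be $\sigma$-fixed. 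I would then argue that the assignment $\{b_{j,i}\}\mapsto(b_{1,1},\{q_{j,i}\}_{j\ge2},\{r_i\})$ is an invertible (triangular) polynomial change of coordinates: the relations $b_{1,i+1}=b_{1,i}+r_i$ recover all the $b_{1,i}$ from $b_{1,1}$ and the $r_i$, and since each $q_{j,i}$ contains $b_{j,i}$ with nonzero coefficient and otherwise only $b_{j',i}$ with $j'<j$, one solves for the remaining $b_{j,i}$ by induction on $j$. Consequently $K[\{b_{j,i}\}]=K[b_{1,1}][\{q_{j,i}\},\{r_i\}]$ with $\sigma$ translating $b_{1,1}$ and fixing the other coordinates, so a polynomial is $\sigma$-invariant if and only if it is independent of $b_{1,1}$; that is, $K[\{b_{j,i}\}]^{\sigma}=K[\{q_{j,i}\},\{r_i\}]$.

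Combining the two reductions, every semi-invariant of $R$ of multiplier $\lambda$ is a $K$-linear combination of monomials $(\prod_i a_{0,i}^{m_i})\prod q_{j,i}^{e_{j,i}}\prod r_i^{f_i}$ subject to $\prod_i\lambda_i^{m_i}=\lambda$. Substituting back $q_{j,i}=p_{j,i}/a_{0,i}^{\deg p_{j,i}}$ and $r_i=p_i^{\mathrm{cross}}/(a_{0,i}a_{0,i+1})$ turns each such monomial into $p_I/\prod_i a_{0,i}^{m_i}$ for a multi-subset $I$, and absorbing the nonnegative powers of $a_{0,i}$ into the linear forms (which are themselves among the $p_i$) yields the claimed shape $\big(\sum_I c_I p_I\big)/\prod_{i=1}^l a_{0,i}^{m_i}$. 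Tracking multipliers through the substitution gives exactly the constraint $\prod_{i\in I}\mu_i=\lambda\prod_{i=1}^l\lambda_i^{m_i}$, and any semi-invariant polynomial, viewed inside $R$, acquires this form after clearing denominators. Uniqueness I would deduce from the fact that in the coordinates $(a_{0,i},\{q_{j,i}\},\{r_i\})$ the ring $R$ is a Laurent polynomial ring, so the monomials appearing are linearly independent and the representation with minimal denominator exponents $m_i$ is forced.

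The step I expect to be the main obstacle is the generation claim of the second paragraph: that the explicitly constructed forms, and not merely some abstract transcendence basis, generate $K[\{b_{j,i}\}]^{\sigma}$. This rests on the precise leading-term structure of the within-block forms $q_{j,i}$—that each introduces the single new variable $b_{j,i}$ linearly—which is what makes the change of coordinates triangular and hence invertible; verifying this for the forms produced by the general construction, and reconciling them with the slice invariants built from $b_{1,1}$, is the technical heart of the argument. The remaining bookkeeping of multipliers and of the passage from Laurent expansions to genuine polynomials with a common denominator is routine.
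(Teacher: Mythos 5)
Your proposal is correct, but it follows a genuinely different route from the paper's own proof. The paper proves this statement (as Theorem \ref{thm2}) by a double induction: on the size of the matrix and, within that, on the degree $m$ of the semi-invariant in the last variable $a_{n_k-1,k}$; one shows the top coefficient $h_m$ is itself semi-invariant for the truncated matrix, and then uses the leading-term structure of the generators (the only term of $p_n$ containing $a_n$ is $2a_na_0$ for even $n$ and $-na_na_0^2$ for odd $n$, by Lemmas \ref{lem1} and \ref{lem2}, or the mixed form of Lemma \ref{lem3} when $n_k=2$) to subtract $h_m\,p^m$ suitably scaled by $(ca_0^r)^m$ and lower the degree in the last variable — this subtraction is exactly where the denominators $a_{0,i}^{m_i}$ enter; algebraic independence is handled separately by the Jacobian criterion. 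You instead localize at the $a_{0,i}$, pass to ratio coordinates $b_{j,i}=a_{j,i}/a_{0,i}$, split the action into a diagonal part on the $a_{0,i}$ and a unipotent affine automorphism $\sigma$ on the $b$'s, use unipotence to force every graded component of a semi-invariant to be an honest $\sigma$-invariant, and compute the $\sigma$-invariant ring exactly via the triangular change of coordinates $(b_{1,1},\{q_{j,i}\},\{r_i\})$ with $b_{1,1}$ a slice (here characteristic zero enters through $F(t+1)=F(t)\Rightarrow F$ constant, and through the nonvanishing of the leading coefficients). Both arguments ultimately rest on the same two ingredients — the leading-term structure of the within-block forms and characteristic zero — and the fact you flag as the technical heart (that $q_{j,i}$ contains $b_{j,i}$ linearly with nonzero coefficient and otherwise only lower variables of its own block) is precisely what Lemmas \ref{lem1} and \ref{lem2} record, so there is no gap there. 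What your route buys is a complete structural description of the semi-invariants of the localized ring as Laurent monomials in the $a_{0,i}$ times the polynomial ring $K[\{q_{j,i}\},\{r_i\}]$, from which generation, algebraic independence (no Jacobian criterion needed), and uniqueness all fall out at once, and which explains conceptually why denominators are unavoidable; what the paper's induction buys is a more elementary argument that never leaves the polynomial ring and keeps explicit control of the denominator exponents.
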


 Let us briefly explain the rich history of the problem studied in this article. Invariant theory, started in the middle of the nineteenth century, from the works of Boole and Cayley, see \cite{Bo} and \cite{Ca}. The classical invariant theory deals with algebraic expressions in terms of coefficients of a form (say a binary form) that remain invariant under the action of the group of general linear transformations. As an example, for $a_0X_1^2+a_1X_1X_2+a_2X_2^2$ the discriminant $\Delta=a_1^2-4a_0a_2$ is an invariant. This point is explained in more detail in section 3. In modern language, the invariant theory deals with the following problem. Assume a group $G$ acts linearly on a finite-dimensional vector space $V$ over a field $K$. Then we have an action of $G$ on the space of polynomials $K[V]$ and we want to study the $G$ invariant polynomials $K[V]^G$. For example, Hilbert proved that if $G$ is a reductive group then this algebra is finitely generated. The famous 14th problem of Hilbert asks about the validity of this statement for other groups. It was first Nagata who in 1959 \cite{N} gave a counterexample for this question. It was proved by Weitzenb\"ock in 1932 \cite{W} that if $G$ is the additive group of complex numbers, then finite dimensionality is true. This theorem is valid for any field of characteristic zero and algebraically closed. In our set up, it will imply that for the Jordan bock $J=J_{n,1}$ or more generally for $\mbox{diag}(J_{n_1,1},\dots, J_{n_k,1})$ the space of invariants is finitely generated. This seems to be a better result than the main theorem above, where we have allowed certain denominators. However, the problem is that there is no algorithm to find such a finite set of generators as of now. Also, it is worthwhile to mention the following celebrated theorem of Popov \cite{P} that says if $G$ is a non-reductive group then there is a linear action of $G$ on certain finite-dimensional vector space $V$ over a field $K$ such that $K[V]^G$ is not finitely generated. This for instance shows that the above-mentioned theorem of Weitzenb\"ock is not true over fields of positive characteristic. It is extremely interesting to generalize the results of this paper to the case of fields with positive characteristics. Finally, for a modern approach to invariant theory, we encourage the reader to see the references \cite{CD} and \cite{PP}.

 \begin{remark} We make the following observations.
 
 \begin{enumerate}
 \item Note that a polynomial is semi-invariant for the (multiplicative) Jordan block $J_{n,\lambda}$ if and only if it is invariant for $J_{n,1}$, which we usually denote by $J_n$.

 \item If instead of a multiplicative Jordan block $J_{n,\lambda}$ as above, we use an additive Jordan block $J'_{n,\lambda}$ with $\lambda$ on its main diagonal and $1$ on its off diagonal, below the main diagonal and zero everywhere else, then $p(a_0,a_1,\dots,a_{n-1})$ is a semi-invariant for $J_{n,\lambda}$ if and only if $q(a_0,a_1,\dots,a_n):=p(a_0,\lambda a_1,\lambda^2a_2,\dots, \lambda^{n-1} a_{n-1})$ is a semi-invariant for $J'_{n,\lambda}$.
 \item In general we may see easily that $p(a_{0,1},\dots, a_{{n_1-1},1},\dots, a_{0,k},\dots, a_{n_k-1,k})$ is a semi invariant for the matrix $A=\mbox{diag}(J_{n_1,\lambda_1},\dots, J_{n_k,\lambda_k})$ if and only if $$q:=p(a_{0,1},\lambda_1a_{1,1},\dots, \lambda_1^{n_1-1}a_{{n_1-1},1},\dots, a_{0,k},\dots, \lambda_k^{n_k-1}a_{n_k-1,k})$$ is a semi-invariant for the matrix $A'=\mbox{diag}(J'_{n_1,\lambda_1},\dots, J'_{n_k,\lambda_k})$.
 \end{enumerate}
 \end{remark}
 \section{Main results}\label{sec2}
  
 In this section, a canonical basis for semi-invariants of a non-singular matrix $A$ given in its Jordan normal form is constructed. First, we need a few lemmas and conventions.
 \\
 \begin{definition}
 For a monomial $a_0^{i_0}\dots a_{n-1}^{i_{n-1}}$, its degree is $\sum_{k=0}^{n-1} i_k$ and its weight is $\sum_{k=0}^{n-1}ki_k$. Any polynomial can be decomposed into its degree homogeneous components and its weight homogeneous components.
 \end{definition}
 \begin{definition}
 We define a differential operator $D$ on the polynomial algebra $K[a_0,a_1,\dots]$ by
 $$D=\sum_{i=1}^{\infty} a_{i-1}\frac{\partial}{\partial a_i}.$$
 \end{definition}
 \begin{remark}
 The operator $D$ sends a degree homogeneous polynomial of degree $d$ to a degree homogeneous polynomial of degree $d$. It sends a weight homogeneous polynomial of weight $w$ to a weight homogeneous polynomial of weight $w-1$. Hence for any polynomial $p$ of maximum weight $w$, one has $D^{w+1}(p)=0$.
 \end{remark}
 
 \begin{lemma}
 Let $p\in K[a_0,\dots, a_{n-1}]$ and $J_n=J_{n,1}$ be the (multiplicative) Jordan block of size $n$ and eigenvalue $1$. Then 
 $$D(p(J_na))=(Dp)(J_na)$$
 that is the two operators $D$ and $p(a)\rightarrow p(J_na)$ commute. This implies that if $p$ is $J_n$-invariant then so is $D(p)$.
 \end{lemma}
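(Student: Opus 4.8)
The plan is to exploit the algebraic structure of the two operators rather than to compute $D(p(J_n a))$ term by term. Write $T$ for the substitution operator $T(p)=p(J_n a)$; since $J_n$ has $1$ on the diagonal and $1$ immediately below it, $T$ fixes $a_0$ and replaces $a_j$ by $a_{j-1}+a_j$ for $1\le j\le n-1$ (with the convention $a_{-1}=0$), and $T$ is a $K$-algebra homomorphism of $K[a_0,\dots,a_{n-1}]$. The operator $D$ is a derivation, being the first-order differential operator $\sum_{i\ge 1}a_{i-1}\partial_{a_i}$ whose only nonzero terms on this algebra are those with $1\le i\le n-1$. The claim to prove is $D\circ T=T\circ D$.

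First I would record that both composites are \emph{twisted derivations} relative to $T$: using that $T$ is a homomorphism and $D$ a derivation, one checks directly that for all $p,q$
$$(D\circ T)(pq)=(D\circ T)(p)\,T(q)+T(p)\,(D\circ T)(q),$$
and the identical identity holds for $T\circ D$. Hence their difference $E:=D\circ T-T\circ D$ satisfies the same twisted Leibniz rule $E(pq)=E(p)T(q)+T(p)E(q)$. A twisted derivation annihilates constants and is determined by its values on the algebra generators, so it suffices to verify $E(a_j)=0$ for each $j=0,\dots,n-1$; by induction on degree this forces $E$ to vanish on every monomial, hence on all of $K[a_0,\dots,a_{n-1}]$.

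The remaining step is the check on generators, a short computation: for $j=0$ both $D(T(a_0))$ and $T(D(a_0))$ vanish; for $j=1$ both equal $a_0$; and for $2\le j\le n-1$ both equal $a_{j-2}+a_{j-1}$. I expect the only point requiring care to be the index bookkeeping at the two boundaries $a_0$ and $a_{n-1}$ — confirming that $T$ and $D$ each map $K[a_0,\dots,a_{n-1}]$ into itself and that the convention $a_{-1}=0$ is applied consistently — rather than any genuine obstacle.

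Finally, the stated consequence is immediate: if $p$ is $J_n$-invariant, i.e.\ $T(p)=p$, then $T(D(p))=D(T(p))=D(p)$, so $D(p)$ is $J_n$-invariant as well. As an alternative, more conceptual route I would note that $D$ is the infinitesimal generator of the one-parameter group $\phi_t\colon p\mapsto p(\exp(tN)a)$, where $N=J_n-I$ is the nilpotent shift; since $J_n$ commutes with every $\exp(tN)$, the substitution $T$ commutes with each $\phi_t$, and differentiating $T\circ\phi_t=\phi_t\circ T$ at $t=0$ yields $T\circ D=D\circ T$.
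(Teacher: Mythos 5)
Your proposal is correct, but it proves the commutation by a different mechanism than the paper. The paper's proof is a direct computation: it applies the multivariable chain rule to $p(J_na)$, getting $\frac{\partial}{\partial a_i}\,p(J_na)=\bigl((\frac{\partial}{\partial a_i}+\frac{\partial}{\partial a_{i+1}})p\bigr)(J_na)$, and then re-indexes the resulting double sum to identify it with $(Dp)(J_na)$, handling the boundary terms ($i=1$ and $i=n-1$) inline. You instead exploit structure: since $T\colon p\mapsto p(J_na)$ is an algebra homomorphism and $D$ is a derivation, both $D\circ T$ and $T\circ D$ satisfy the same twisted Leibniz rule, so their difference is a twisted derivation and vanishes once it vanishes on the generators $a_j$ --- and that check is a one-line computation. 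The trade-off is clear: the paper's argument is shorter on the page but concentrates all the work (and the risk of index slips) in one chain-rule-plus-reindexing step for a general $p$; yours front-loads a small abstract observation and in exchange reduces every boundary issue to the trivial evaluations $E(a_j)=0$, and it generalizes verbatim to any substitution operator and any derivation that agree on generators. Your alternative route via the one-parameter group $\phi_t(p)=p(\exp(tN)a)$, with $T$ commuting with each $\phi_t$ because $J_n$ commutes with $\exp(tN)$, is also valid and in fact anticipates the identity $\exp(D)p(a)=p(J'a)$ that the paper itself uses much later (Lemma 4.6) to relate $U$-invariants and $J$-invariants; it is the most conceptual of the three arguments, though it quietly uses that differentiating the polynomial family $t\mapsto\phi_t(p)$ at $t=0$ yields $D$, which deserves the one-line verification you indicate.
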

 \begin{proof} By chain rule
 $$\frac{\partial}{\partial a_i} p(a_0,a_1+a_0,\dots, a_{n-1}+a_{n-2})=\left((\frac{\partial}{\partial a_i}+\frac{\partial}{\partial a_{i+1}})p\right)(J_na)$$
 where for $i=n-1$, the term corresponding to $a_{i+1}$ is omitted. Therefore
 $$D(p(J_na))=\sum_{i=1}^{n-1}(a_{i-1}+a_{i-2})(\frac{\partial}{\partial a_i} p) (J_na)=(Dp)(J_na)$$
 Here for $i=1$, the term $a_{i-2}$ is omitted.
 \end{proof}
 
 \begin{lemma} \label{JJ}
 If $p\in K[a_0,\dots, a_{n-1}]$ is a $J_n$-invariant polynomial, and $p_w$ is the weight homogeneous component of $p$ with highest weight $w$, then $D(p_w)=0$.
 \end{lemma}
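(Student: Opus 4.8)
The plan is to exploit the interaction between the substitution operator $T\colon p(a)\mapsto p(J_na)$ and the derivation $D$ with respect to the weight grading. First I would record the explicit action $J_na=(a_0,\,a_0+a_1,\,a_1+a_2,\dots,a_{n-2}+a_{n-1})$, so that $T$ is the algebra homomorphism sending each variable $a_i$ to $a_i+a_{i-1}$, with the convention $a_{-1}=0$. The key structural observation is that, because $T$ is a ring homomorphism determined by its action on the variables, it decomposes according to weight as
$$T=I+D+R,$$
where $I$ is weight-preserving, the component that lowers weight by exactly one is precisely $D$, and $R$ collects the components that lower weight by at least two.

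To justify this decomposition I would argue on a single monomial $a_{i_1}\cdots a_{i_d}$: applying $T$ yields $\prod_k (a_{i_k}+a_{i_k-1})$, and in the expansion the term with no weight drop is the monomial itself, the terms that drop the weight by exactly one are obtained by replacing a single factor $a_{i_k}$ by $a_{i_k-1}$, and their sum is exactly $D(a_{i_1}\cdots a_{i_d})$ by the product rule; every remaining term replaces two or more factors and hence lowers the weight by at least two. Since $T$ and $D$ are both linear, this identity on monomials propagates to all of $K[a_0,\dots,a_{n-1}]$. One could alternatively check $[W,D]=-D$ for the weight operator $W=\sum_i i\,a_i\,\partial/\partial a_i$ to see that $D$ is homogeneous of weight $-1$, but the homomorphism computation already isolates $D$ as the genuine first-order part.

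With this in hand the lemma follows by a short bookkeeping argument. Write $p=p_0+\dots+p_w$ for the decomposition of the invariant polynomial into weight-homogeneous components, with $p_w$ the top component of weight $w$. Invariance means $Tp=p$, equivalently $(D+R)p=0$. I would then extract the weight-$(w-1)$ homogeneous component of this equation. Since $D$ lowers weight by exactly one and $R$ by at least two, for $j<w$ the piece $Dp_j$ has weight $j-1\le w-2$ and $Rp_j$ has weight at most $j-2\le w-2$, while $Rp_w$ has weight at most $w-2$; the only contribution landing in weight $w-1$ is therefore $Dp_w$. Reading off the weight-$(w-1)$ part of $(D+R)p=0$ thus gives $Dp_w=0$, as claimed.

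I do not anticipate a genuine obstacle. The only point requiring care is the clean separation of the first-order weight-lowering part of the substitution $T$ and its identification with $D$, together with verifying that no other graded piece can land in weight $w-1$. Once the decomposition $T=I+D+R$ is in place, the rest is purely formal.
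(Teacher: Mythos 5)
Your proof is correct and follows essentially the same route as the paper: both arguments expand $p(J_na)$ on monomials, identify the weight-lowering-by-one part of the substitution as exactly $D$, and then compare weight-homogeneous components of the invariance equation $p(J_na)=p(a)$ to isolate $D(p_w)$ in weight $w-1$. Your operator decomposition $T=I+D+R$ is just a slightly more systematic packaging of the paper's computation; no gap.
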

 \begin{proof} If $p=a_0^{i_0}\dots a_{n-1}^{i_{n-1}}$ is a monomial of weight $w$, then 
 $$p(J_na)=a_0^{i_0}(a_1+a_0)^{i_1}\dots (a_{n-1}+a_{n-2})^{i_{n-1}}.$$
 It is now clear that the components of $p(J_na)$ are of weights $\le w$. The component of weight $w$ is $p$ and the component of weight $w-1$ is obtained if from one of the parenthesis say $(a_k+a_{k-1})^{i_k}$ we choose $a_k^{i_k-1}a_{k-1}$ whose coefficient is $i_k$, so it is like taking $a_{k-1}\frac{\partial}{\partial a_k}$ of $p$, this shows that the component of weight $w-1$ is $D(p)$. Now if $p=p_w+p_{w-1}+\dots$, then $p(J_na)=p+D(p_w)+\mbox{terms of weights smaller than}\:\: w-1$. Since $p(J_na)=p(a)$, hence we conclude that $D(p_w)=0$.
 \end{proof}
 
 \begin{definition}
 A polynomial $p$ in $K[a_0,a_1,\dots]$ with $Dp=0$ is called a $U$ invariant. The subset of $U$ invariant polynomials form a sub-algebra of $K[a_0,a_1,\dots]$ denoted by $K[a_0,a_1,\dots]^U$.
 \end{definition}

\begin{lemma}\label{lem1}
Let $A=J_{n+1,\lambda}$ and $n=2m>0$ be an even number. The polynomial 
$$p_{n}=\sum_{i=0}^{m} \sum_{j=m}^{2m-i}(-1)^iA_{i,j} a_ia_{j}$$
where
$$A_{i,j}={m-i\choose j-m}+{m-i-1\choose j-m-1}$$
is a semi-invariant polynomial with multiplier $\lambda^2$. The only term with $a_{n}$ in this form is $2a_{n}a_0$.
\end{lemma}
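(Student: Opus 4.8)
The plan is to reduce the statement to an ordinary invariance property and then exploit a generating-function identity.

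First I would invoke the first item of the Remark. Since $p_n$ is a quadratic form (homogeneous of degree $2$), writing $J_{n+1,\lambda}=\lambda J_{n+1,1}$ gives $p_n(J_{n+1,\lambda}a)=\lambda^2 p_n(J_{n+1,1}a)$, so it suffices to prove that $p_n$ is invariant under $J_{n+1}:=J_{n+1,1}$; its multiplier for $J_{n+1,\lambda}$ is then automatically $\lambda^2$. Now encode a vector $(a_0,\dots,a_n)$, of length $n+1=2m+1$, by its generating polynomial $a(x)=\sum_{i=0}^n a_i x^i$. Since $(J_{n+1}a)_i=a_i+a_{i-1}$ with the convention $a_{-1}=0$, the action becomes $a(x)\mapsto(1+x)a(x)-a_nx^{n+1}$, i.e. multiplication by $1+x$ truncated modulo $x^{n+1}$. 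I write $\rho$ for the induced algebra automorphism of $K[a_0,\dots,a_n]$, so that $J_{n+1}$-invariance of a polynomial means exactly $\rho$-invariance.

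The key observation is that the M\"obius involution $\sigma(x)=-x/(1+x)$ satisfies $1+\sigma(x)=(1+x)^{-1}$, so that $a(x)\,a(\sigma(x))$ is formally unchanged when $a(x)$ is replaced by $(1+x)a(x)$. Clearing denominators, I set
$$\hat a(x)=(1+x)^n a(\sigma(x))=\sum_{j=0}^n(-1)^j a_j\,x^j(1+x)^{n-j},\qquad P(x)=a(x)\,\hat a(x),$$
a polynomial of degree $\le 2n$ whose coefficients are quadratic forms in the $a_i$. A short computation, which uses that $n$ is even, yields the two identities $\rho(a)(x)=(1+x)a(x)-a_nx^{n+1}$ and $(1+x)\rho(\hat a)(x)=\hat a(x)+a_nx^{n+1}$; multiplying them and dividing by $1+x$ shows that $\rho(P)(x)-P(x)$ is divisible by $x^{n+1}$. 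Consequently every coefficient $[x^k]P$ with $k\le n$ is $\rho$-invariant, that is, a genuine $J_{n+1}$-invariant quadratic form and hence a semi-invariant of multiplier $\lambda^2$. This produces, essentially for free, a whole supply of the desired quadratic semi-invariants, and explains \emph{why} such a form exists precisely when $n$ is even (for $n$ odd the analogous form is alternating).

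It then remains to match the prescribed $p_n$ with these coefficients: I expect $p_n$ to equal an explicit combination $\sum_k c_k\,[x^{n-2k}]P$, where for $n=2$ one checks directly that $p_2=[x^2]P-[x^0]P$ with $[x^0]P=a_0^2$. Equivalently one can bypass $P(x)$ and verify invariance by hand, expanding $p_n(J_{n+1}a)-p_n(a)=\sum(-1)^iA_{i,j}\bigl(a_ia_{j-1}+a_{i-1}a_j+a_{i-1}a_{j-1}\bigr)$, collecting the coefficient of each monomial $a_ka_l$, and checking that it vanishes; the precise shape $A_{i,j}=\binom{m-i}{j-m}+\binom{m-i-1}{j-m-1}$ is engineered exactly so that Pascal's rule forces this cancellation. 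The final claim about $a_n=a_{2m}$ is immediate from the formula: a term $a_ia_j$ can involve $a_{2m}$ only if $j=2m$, which forces $i=0$ since $j\le 2m-i$, giving coefficient $(-1)^0A_{0,2m}=\binom{m}{m}+\binom{m-1}{m-1}=2$, i.e. the single term $2a_na_0$. I expect the main obstacle to be the bookkeeping in this matching step, in particular the boundary indices $i=0$, $j=2m-i$, the diagonal $i=j=m$, and the truncated term, where the generic Pascal cancellation has to be verified separately.
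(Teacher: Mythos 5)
Your reduction to $J_{n+1,1}$-invariance and your verification of the last claim (that $j=2m$ forces $i=0$, giving the single term $2a_na_0$ with coefficient $A_{0,2m}=2$) are correct and agree with the paper. The M\"obius/generating-function construction is also correct and genuinely elegant: with $\hat a(x)=(1+x)^n a(\sigma(x))$ one does get $(1+x)\rho(\hat a)(x)=\hat a(x)+a_nx^{n+1}$ (this is exactly where evenness of $n$ enters, via $(-1)^n=1$), hence $(1+x)\bigl(\rho(P)-P\bigr)$ is divisible by $x^{n+1}$, and since $1+x$ is a unit modulo $x^{n+1}$, every coefficient $[x^k]P$ with $k\le n$ is a quadratic $J_{n+1}$-invariant. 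This mechanism for producing quadratic semi-invariants is not in the paper and is worth keeping.

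However, the lemma does not assert the existence of some supply of quadratic invariants; it asserts that the specific form $p_n$ with coefficients $(-1)^iA_{i,j}$ is invariant, and your proposal never closes that gap. The matching step is left as ``I expect $p_n=\sum_k c_k\,[x^{n-2k}]P$,'' checked only for $n=2$, and it cannot be waved off as bookkeeping for two reasons. First, it is logically equivalent to the lemma: the invariants form a proper subspace of all quadratic forms, so you cannot place $p_n$ in the span of the $[x^k]P$ without already knowing that $p_n$ is invariant --- the existence of such an expansion is precisely what has to be proved. Second, the expansion is not clean: for $n=4$ one computes $p_4=[x^4]P-3\,[x^2]P+17\,[x^0]P$, so the coefficients $c_k$ depend on $n$ with no evident pattern, and proving the general identity would be a binomial computation at least as laborious as the direct check. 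Your fallback route --- expand $p_n(Ja)-p_n(a)$, collect the coefficient of each monomial $a_ka_l$, and invoke Pascal's identity --- is exactly the paper's proof: the paper reduces everything to the identities $A_{0,j}=A_{1,j}+A_{1,j-1}$ and $A_{i,j+1}=A_{i+1,j}+A_{i+1,j+1}$ together with the boundary cases $j=m$, $j=2m$, $j=2m-i$ and $i=m$. But in your write-up this remains a statement of what one would check rather than a verification, so as it stands neither of your two routes completes the proof.
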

Note that by convention $n\choose k$ is zero if $k<0$ or $n<k$ and is $1$ if $n=k=0$.
\begin{proof}
It is enough to show that $p_{n}$ is a $J=J_{n+1,1}$-invariant. Note that 
$$p_{n}(Ja)=\sum_{j=m}^{2m} A_{0,j}a_0(a_j+a_{j-1})+\sum_{i=1}^m \sum_{j=m}^{2m-i} (-1)^i A_{i,j}(a_i+a_{i-1})(a_j+a_{j-1})$$
Now to show the invariance of $p_{n}$ for $J$, we need to show that for $m<j<2m$
$$A_{0,j}=A_{1,j}+A_{1,j-1}$$
for $j=m$
$$A_{0,m}=A_{1,m}$$
and for $j=2m$
$$A_{0,2m}=A_{1,2m-1}.$$

These will show that the terms with $a_0$ in both of $p_{n}(Ja)$ and $p_{n}(a)$ are the same. To show this for other terms of the form $a_ia_j$ with $1\le i< m$ and $m\le j<2m-i$, we need to show that 
$$A_{i,j+1}=A_{i+1,j}+A_{i+1,j+1}$$
All of these trivially follow with the aid of Pascal's identity. If $i=m$ then $j=m$ then all the terms are zero. Also if $j=2m-i$, again all the terms are zero. Also the only term with $a_{n}$ appears if $j=n=2m$, and hence $i=0$ and the coefficient turns out to be $2$.

\end{proof}

\begin{remark}
The quadratic forms $p_n$, for even $n\ge 2$ can be defined inductively as follows:
$$p_n(a_0,\dots, a_n)=-p_{n-2}(a_1,\dots, a_{n-1})+a_0q_n(a_1,\dots, a_n)$$
where $q_2(a_1,a_2)=a_1+2a_2$ and
$$q_n(a_1,\dots, a_n)=q_{n-2}(a_2,\dots, a_{n-1})+q_{n-2}(a_3,\dots, a_n).$$
We also set $p_0(a_0)=a_0^2$.
\end{remark}

\begin{example}
We list here two more quadratic semi-invariant forms for the Jordan blocks $J_{5,\lambda}$ and $J_{7,\lambda}$. 

\begin{eqnarray}
p_4(a)&=&(a_2^2-2a_3a_1+2a_4a_0)+(-a_2a_1+3a_3a_0)+a_2a_0\nonumber\\
p_6(a)&=&(-a_3^2+2a_4a_2-2a_5a_1+2a_6a_0)+(a_3a_2-3a_4a_1+5a_5a_0)\nonumber\\
&+&(-a_3a_1+4a_4a_0)+a_3a_0\nonumber
\end{eqnarray}

An amusing fact is that $Dp_4=p_2$ and $Dp_6=p_4$. It will be shown in Lemma 2.5 that for $n\ge 4$ $Dp_n=p_{n-2}$.

\begin{lemma}\label{lem2}
 Let $A=J_{n+1,\lambda}$ and $n\ge 3$ be an odd integer. Let $m=\frac{n+1}{2}$. The cubic form
 $$p_{n}=a_1p_{n-1}+a_0g_{n}$$
 where
 $$g_{n}=\sum_{i=0}^{m-1}\:\:\sum_{j=m}^{2m-1-i} (-1)^{i-1}(j-i){m-i-1\choose j-m}a_ia_j.$$
 is a semi-invariant form for $A$ with multiplier $\lambda^3$. Also the only term with $a_{n}$ in $p_{n}$ is $-na_{n}a_0^2$.
\end{lemma}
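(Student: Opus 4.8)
The plan is to reduce the claim to a single combinatorial identity relating $g_n$ and the quadratic form $p_{n-1}$, and then to verify that identity by comparing coefficients monomial by monomial with Pascal's rule, exactly as in the proof of Lemma \ref{lem1}. First I would use the observation (recorded in the Remark of the introduction) that a polynomial is semi-invariant for $J_{n+1,\lambda}$ if and only if it is invariant for $J:=J_{n+1}=J_{n+1,1}$. Since $p_n$ is homogeneous of degree $3$ and $J_{n+1,\lambda}=\lambda J$, invariance of $p_n$ under $J$ gives $p_n(J_{n+1,\lambda}a)=\lambda^3 p_n(Ja)=\lambda^3 p_n(a)$, so it upgrades automatically to semi-invariance with multiplier $\lambda^3$. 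Thus it suffices to prove $p_n(Ja)=p_n(a)$.

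Next I would exploit the recursive shape $p_n=a_1p_{n-1}+a_0g_n$. As $n$ is odd, $n-1$ is even and $p_{n-1}$ is the quadratic form of Lemma \ref{lem1}; it involves only $a_0,\dots,a_{n-1}$, on which $J=J_{n+1}$ acts exactly as $J_n$ does, so Lemma \ref{lem1} yields $p_{n-1}(Ja)=p_{n-1}(a)$. Using $(Ja)_0=a_0$ and $(Ja)_1=a_1+a_0$,
$$p_n(Ja)=(a_1+a_0)\,p_{n-1}(a)+a_0\,g_n(Ja),$$
hence
$$p_n(Ja)-p_n(a)=a_0\big(p_{n-1}(a)+g_n(Ja)-g_n(a)\big).$$
Everything therefore reduces to the single identity
$$g_n(Ja)-g_n(a)=-\,p_{n-1}(a).\qquad(\star)$$

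To prove $(\star)$ I would write $g_n=\sum_{i,j}B_{i,j}a_ia_j$ with $B_{i,j}=(-1)^{i-1}(j-i)\binom{m-i-1}{j-m}$, substitute $a_i\mapsto a_i+a_{i-1}$ (with $a_{-1}=0$), and expand
$$g_n(Ja)-g_n(a)=\sum_{i,j}B_{i,j}\big(a_ia_{j-1}+a_{i-1}a_j+a_{i-1}a_{j-1}\big).$$
Collecting the coefficient of a fixed monomial $a_ra_s$ on the left, I would compare it with the coefficient of $a_ra_s$ in $-p_{n-1}$, whose coefficients are given explicitly by Lemma \ref{lem1} with the role of $m$ played by $m-1$. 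Each comparison is an identity among a handful of binomial coefficients that should collapse under $\binom{p}{q}=\binom{p-1}{q}+\binom{p-1}{q-1}$, once the weight factors $(j-i)$ and the alternating signs are tracked; the boundary monomials (extreme indices and the diagonal $r=s$) would be checked separately, just as three boundary relations were isolated in Lemma \ref{lem1}. The final assertion about the $a_n$-term is then immediate: $p_{n-1}$ does not involve $a_n=a_{2m-1}$, so the only contribution comes from $a_0g_n$, and in $g_n$ the variable $a_{2m-1}$ occurs only for $(i,j)=(0,2m-1)$, with coefficient $B_{0,2m-1}=(-1)^{-1}(2m-1)\binom{m-1}{m-1}=-(2m-1)=-n$; multiplying by $a_0$ gives exactly $-n\,a_na_0^2$.

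I expect the main obstacle to be $(\star)$. Unlike Lemma \ref{lem1}, where the form was genuinely invariant and the coefficient recursions were homogeneous Pascal relations telescoping to zero, here the linear weight factor $(j-i)$ together with the fact that the difference $g_n(Ja)-g_n(a)$ is the \emph{nonzero} form $-p_{n-1}$ means the binomial identities must telescope to the prescribed coefficients of $p_{n-1}$ rather than to zero. Managing these weight factors simultaneously with the several boundary cases is the delicate step. A cleaner route worth attempting is to avoid coefficient chasing entirely by encoding the quadratic forms as symmetric matrices: if $g_n$ corresponds to a symmetric matrix $B$ and $J=I+L$ with $L$ the lower-shift, then $g_n(Ja)-g_n(a)$ corresponds to $L^{\top}B+BL+L^{\top}BL$, and $(\star)$ becomes a Sylvester-type identity whose structure one might resolve in closed form; if that fails, $(\star)$ can alternatively be proved by induction on $m$ after establishing a two-step recursion for $g_n$ parallel to the one recorded for the even forms $p_n$.
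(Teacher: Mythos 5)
Your proposal is correct and follows essentially the same route as the paper's proof: the paper likewise reduces everything to the single identity $g_n(J_{n+1}a)-g_n(a)=-p_{n-1}(a)$ (your $(\star)$) using the $J_n$-invariance of $p_{n-1}$, and then verifies it by comparing the coefficients of $a_0a_{j-1}$ and of $a_ia_{j-1}$ term by term via Pascal's identity, with the same observation for the $-na_na_0^2$ term. The coefficient chase you defer does collapse exactly as you predict (the weight factors $(j-i)$ cancel after two applications of Pascal's rule), so none of your fallback strategies is needed.
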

\begin{proof}
Since we know that $p_{n-1}$ is $J_{n}$ invariant, we need to show that 
$$g_{n}(J_{n+1}a)=g_{n}(a)-p_{n-1}(a)$$
Now $g_{n}(J_{n+1}a)-g_{n}(a)$ is equal to 
$$-\sum_{j=m}^{2m-1}j{m-1\choose j-m}a_0a_{j-1}-\sum_{i=1}^{m-1}\sum_{j=m}^{2m-1-i}(-1)^i(j-i){m-i-1\choose j-m}(a_{i-1}a_j+a_{i-1}a_{j-1}+a_ia_{j-1})$$
The coefficient of $a_0a_{j-1}$ is 
$$-j{m-1\choose j-m}+(j-1){m-2\choose j-m}+(j-2){m-2\choose j-m-1}.$$
By Pascal's identity, this is equal to $-{m-2\choose j-m}-2{m-2\choose j-m-1}$ which is equal to $-{m-1\choose j-m}-{m-2\choose j-m-1}$. This is the same as the coefficient of $a_0a_{j-1}$ in $-p_{n-1}$. Note that $\frac{n-1}{2}=m-1$. A similar calculation, shows that the coefficient of $a_ia_{j-1}$ is
$$(-1)^{i-1}\left((j-i){m-i-1\choose j-m}-(j-i-1){m-i-2\choose j-m}-(i-j-2){m-i-2\choose j-m-1}\right)$$
which is equal to 
$$(-1)^{i-1}\left({m-i-2\choose j-m}+2{m-i-2\choose j-m-1}\right)=(-1)^{i-1}\left({m-i-1\choose j-m}+{m-i-2\choose j-m}\right)$$
and this is exactly the coefficient of $a_ia_{j-1}$ in $-p_{n-1}$.

This proves the invariance. Also the only term with $a_{n}$ appears in $a_0g_{n}$. Here $j=n=2m-1$ and $i=0$, and the coefficient turns out to be $-n$.
\end{proof}
\begin{remark}\label{g}
Using the explicit definitions of $g_n$ and $p_{n+1}$, for an odd value of $n$, one sees that
$$g_n=W(p_{n+1})-(n+1)p_{n+1}$$
where $W$ is an operator on $K[a_0,a_1,\dots]$ that sends a polynomial $f$ of weight $w$ to $wf$, and is extended linearly.
\end{remark}

\end{example}
\begin{example} We also list two more cubic semi-invariant forms for the Jordan blocks $J_{6,\lambda}$ and $J_{8,\lambda}$.

\begin{eqnarray}
p_5(a)&=&(a_2^2a_1-2a_3a_1^2-a_3a_2a_0+5a_4a_1a_0-5a_5a_0^2)+(-a_2a_1^2+5a_3a_1a_0-8a_4a_0^2)\nonumber\\
&+&(a_2a_1a_0-3a_3a_0^2)\nonumber\\
p_7(a)&=&(-a_3^2a_1+2a_4a_2a_1-2a_5a_1^2+a_4a_3a_0-3a_5a_2a_0+7a_6a_1a_0-7a_7a_0^2)\nonumber\\
&+&(a_3a_2a_1-3a_4a_1^2-2a_4a_2a_0+13a_5a_1a_0-18a_6a_0^2)\nonumber\\
&+&(-a_3a_1^2+7a_4a_1a_0-15a_5a_0^2)+(a_3a_1a_0-4a_4a_0^2)\nonumber
\end{eqnarray}

An amusing fact is that $Dp_5=p_3$ and more generally it will be shown below that in genral $Dp_n=p_{n-2}$. It is interesting that $Dp_3=0$.
\end{example}

\begin{lemma}
If $n\ge 4$ then $Dp_n=p_{n-2}$.
\end{lemma}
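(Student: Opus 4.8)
The plan is to split the proof according to the parity of $n$, since for even $n$ the polynomial $p_n$ is the quadratic form of Lemma \ref{lem1} while for odd $n$ it is the cubic form of Lemma \ref{lem2}, and in both cases $Dp_n$ and $p_{n-2}$ have the same parity. Throughout I use that $D$ is a derivation of $K[a_0,a_1,\dots]$, so that $D(a_ia_j)=a_{i-1}a_j+a_ia_{j-1}$ with the convention $a_{-1}=0$; this also covers the diagonal case $D(a_i^2)=2a_{i-1}a_i$.

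For the even case $n=2m$ I would compute $Dp_n$ directly from the explicit formula of Lemma \ref{lem1} and match it with $p_{n-2}$, whose coefficients are given by the same formula with $m$ replaced by $m-1$; write $A'_{r,s}=\binom{m-1-r}{s-m+1}+\binom{m-2-r}{s-m}$ for these. In $Dp_n=\sum_{i,j}(-1)^iA_{i,j}(a_{i-1}a_j+a_ia_{j-1})$ the two generic contributions to a monomial $a_ra_s$ come from $a_{i-1}a_j$ with $(i,j)=(r+1,s)$ and from $a_ia_{j-1}$ with $(i,j)=(r,s+1)$, giving the coefficient $(-1)^r\left(A_{r,s+1}-A_{r+1,s}\right)$. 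The identity to verify is therefore $A_{r,s+1}-A_{r+1,s}=A'_{r,s}$, and this follows from two applications of Pascal's identity: expanding $\binom{m-r}{s-m+1}$ reduces the left-hand side to $\binom{m-r-1}{s-m+1}+\binom{m-r-2}{s-m}$, which is exactly $A'_{r,s}$.

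The real work in the even case is the bookkeeping of the boundary terms, which I expect to be the main obstacle: the edge $r=0$ (where $a_{i-1}a_j$ requires $i\ge 1$), the top index $s=n-1$ (where the relevant binomials vanish by the convention $\binom{n}{k}=0$ for $k<0$ or $k>n$, so that the degree-$(n-1)$ part of $Dp_n$ cancels, consistently with Lemma \ref{JJ} applied to the highest-weight component of $p_n$), the diagonal $a_r^2$, and the terms $a_{m-1}a_m$ adjacent to the diagonal, where a third source $a_ia_{j-1}$ with $(i,j)=(s,r+1)$ must be included. Each of these is a finite, self-contained check against the binomial convention, which I would carry out one case at a time.

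For the odd case $n\ge 5$ I would avoid a second direct computation and instead reduce to the even case. Since $D$ is a derivation and $p_n=a_1p_{n-1}+a_0g_n$ with $Da_1=a_0$ and $Da_0=0$, one gets
$$Dp_n=a_0p_{n-1}+a_1\,Dp_{n-1}+a_0\,Dg_n.$$
Here $n-1$ is even and $\ge 4$, so the even case already yields $Dp_{n-1}=p_{n-3}$; since $p_{n-2}=a_1p_{n-3}+a_0g_{n-2}$, it remains only to prove $Dg_n=g_{n-2}-p_{n-1}$. For this I would invoke Remark \ref{g}, which writes $g_n=(W-(n+1))p_{n+1}$ for odd $n$, together with the commutation relation $[D,W]=D$, equivalently $DW=(W+1)D$, which holds because $D$ lowers weight by one while $W$ multiplies a weight-$w$ form by $w$. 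Then
$$Dg_n=(DW-(n+1)D)p_{n+1}=(W-n)\,Dp_{n+1}=(W-n)p_{n-1},$$
using the even case $Dp_{n+1}=p_{n-1}$, and the right-hand side equals $g_{n-2}-p_{n-1}=(W-(n-1)-1)p_{n-1}$ by Remark \ref{g} applied with $n-2$ in place of $n$. This closes the odd case and leaves the even coefficient computation as the only genuinely computational ingredient.
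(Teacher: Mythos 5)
Your proof is correct, but it takes a genuinely different route: it inverts the logical structure of the paper's argument. The paper's single direct computation is on $g_n$: it asserts $Dg_n=g_{n-2}-p_{n-1}$ for odd $n\ge 5$ straight from the explicit formula (with details left to the reader), and then gets the even case $Dp_{n-1}=p_{n-3}$ with no coefficient work at all, by applying $D$ to the transformation law $g_n(Ja)=g_n(a)-p_{n-1}(a)$ from the proof of Lemma \ref{lem2} and using the earlier lemma that $D$ commutes with the substitution $p(a)\mapsto p(Ja)$, together with the $J$-invariance of $p_{n-1}$ and $g_{n-2}(Ja)=g_{n-2}(a)-p_{n-3}(a)$. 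You do the opposite: your single direct computation is the even case, via the identity $A_{r,s+1}-A_{r+1,s}=A'_{r,s}$ (which is right, and your boundary cases do check out: the top index cancels since $A_{0,2m}=A_{1,2m-1}=2$, the diagonal term $a_{m-1}^2$ comes only from $(i,j)=(m-1,m)$, and $a_{m-1}a_m$ cancels between $D(a_m^2)$ and the $(m-1,m+1)$ term, matching its absence from $p_{n-2}$); you then recover $Dg_n=g_{n-2}-p_{n-1}$ from Remark \ref{g} and the commutator $[D,W]=D$, invoking the even case at $n+1$. That operator-algebra step is clean, avoids ever touching the coefficients of $g_n$, and is very much in the spirit of the commutator identities the paper uses elsewhere (such as $DL-LD=\mathrm{id}$ in section 4); the trade-offs are that the computational burden shifts to the double-indexed boundary analysis of the even case, and that you lean on Remark \ref{g}, which the paper states by inspection without proof --- though that is no worse than the paper's own ``details are left to the reader.'' Both arguments finish the odd case with the identical derivation-property computation. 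One point worth stating explicitly in a final write-up: your even case is established for each even $n\ge 4$ independently, with no induction, so using it at $n+1>n$ inside the odd case involves no circularity.
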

\begin{proof} We first show that if $n\ge 5$ and is odd then $Dg_{n}=g_{n-2}-p_{n-1}$. 
This follows from the explicit representation of $g_n$ as a polynomial
$$g_{n}(a)=\sum_{i=0}^{m-1}\sum_{j=m}^{2m-1-i} (-1)^{i-1}(j-i){m-i-1\choose j-m}a_ia_j.$$
where $m=\frac{n+1}{2}$. The details are left to the reader.
According to the proof of Lemma 2.4 we have
$$g_n(Ja)=g_n(a)-p_{n-1}(a)$$
If we take $D$ from both sides and use Lemma 2.1 and the above calculation, we get
$$g_{n-2}(Ja)-p_{n-1}(Ja)=g_{n-2}(a)-p_{n-1}(a)-Dp_{n-1}.$$
Now since $p_{n-1}$ is $J$-invariant, we deduce that $Dp_{n-1}=p_{n-3}$ for $n\ge 5$ and odd. This proves the Lemma for even values of $n$.
If $n\ge 5$ and is odd then
$$Dp_{n}=a_0p_{n-1}+a_1p_{n-3}+a_0(g_{n-2}-p_{n-1})=a_1p_{n-3}+a_0g_{n-2}=p_{n-2}.$$
And the lemma is proved completely. 
\end{proof}
\begin{lemma}\label{lem3}
Let $A=\mbox{diag}(J_{n_1,\lambda_1},\dots, J_{n_k,\lambda_k})$ be a square matrix in Jordan normal form. With $n_1\ge \dots\ge n_l>1=n_{l+1}=\dots=n_k$. Then for $i=1,\dots l-1$ the quadratic forms
$$p_{n-l+i}:=a_{0,i}a_{1,i+1}-a_{0,i+1}a_{1,i}$$
are semi-invariant forms with multiplier $\lambda_i\lambda_{i+1}$.
\end{lemma}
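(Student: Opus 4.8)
The plan is to verify the defining relation $p_{n-l+i}(Aa)=\lambda_i\lambda_{i+1}\,p_{n-l+i}(a)$ directly, exploiting that $p_{n-l+i}$ involves only the four variables $a_{0,i},a_{1,i},a_{0,i+1},a_{1,i+1}$. These variables all exist precisely because $1\le i\le l-1$ forces $n_i,n_{i+1}\ge n_l>1$, so each of the blocks $J_{n_i,\lambda_i}$ and $J_{n_{i+1},\lambda_{i+1}}$ has size at least $2$. Thus the whole question reduces to understanding how $A$ acts on these four coordinates.

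First I would record that action, read off from the multiplicative Jordan block structure. Since the $r$-th entry of $J_{n,\lambda}a$ equals $\lambda a_r+\lambda a_{r-1}$ (with the $a_{-1}$ term dropped), the map $a\mapsto Aa$ sends
\[
a_{0,i}\mapsto\lambda_i a_{0,i},\qquad a_{1,i}\mapsto\lambda_i(a_{0,i}+a_{1,i}),
\]
and likewise for block $i+1$ with $\lambda_{i+1}$ in place of $\lambda_i$. Crucially, $(Aa)_{1,i}$ depends only on $a_{0,i}$ and $a_{1,i}$ (there is no superdiagonal entry in $J_{n,\lambda}$), so the four relevant coordinates transform among themselves and no other variables intrude.

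The cleanest way to finish is to view the form as the $2\times2$ determinant
\[
p_{n-l+i}=\det\begin{pmatrix} a_{0,i} & a_{0,i+1}\\ a_{1,i} & a_{1,i+1}\end{pmatrix},
\]
whose columns $c_1,c_2$ are the leading two coordinates of blocks $i$ and $i+1$. Writing $N=\begin{pmatrix}1&0\\1&1\end{pmatrix}$ for the common unipotent shape of a $2\times2$ Jordan corner, the action above multiplies $c_1$ by $\lambda_i N$ and $c_2$ by $\lambda_{i+1}N$. By multilinearity of the determinant in its columns together with $\det N=1$,
\[
p_{n-l+i}(Aa)=\det(\lambda_i N c_1\mid \lambda_{i+1}N c_2)=\lambda_i\lambda_{i+1}\det N\,\det(c_1\mid c_2)=\lambda_i\lambda_{i+1}\,p_{n-l+i}(a),
\]
which is exactly semi-invariance with multiplier $\lambda_i\lambda_{i+1}$.

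There is no genuine obstacle here: the computation is short, and the single point worth flagging is that the symmetric cross-term $\lambda_i\lambda_{i+1}\,a_{0,i}a_{0,i+1}$ generated by the two substitutions cancels — which is precisely the statement $\det N=1$. If one prefers to avoid determinant language, the identical conclusion follows by substituting the four transformed coordinates into $a_{0,i}a_{1,i+1}-a_{0,i+1}a_{1,i}$ and observing this cancellation by hand.
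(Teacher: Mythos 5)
Your proof is correct and is essentially the paper's own argument: the paper simply substitutes the transformed coordinates $\bigl(\lambda_i a_{0,i},\ \lambda_i(a_{0,i}+a_{1,i})\bigr)$ and $\bigl(\lambda_{i+1} a_{0,i+1},\ \lambda_{i+1}(a_{0,i+1}+a_{1,i+1})\bigr)$ into the form and observes that the cross-terms $\lambda_i\lambda_{i+1}a_{0,i}a_{0,i+1}$ cancel, which is exactly the ``by hand'' computation you describe in your final paragraph. Your determinant packaging via multilinearity and $\det N=1$ is a pleasant conceptual gloss on that same one-line check, not a different argument.
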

\begin{proof}
Note that 
$$(\lambda_i a_{0,i})(\lambda_{i+1}a_{0,i+1}+\lambda_{i+1} a_{1,i+1})-(\lambda_{i+1}a_{0,i+1})(\lambda_i a_{0,i}+\lambda_i a_{1,i})= \lambda_i\lambda_{i+1}(a_{0,i}a_{1,i+1}-a_{0,i+1}a_{1,i})$$

\end{proof}
\begin{remark}
These polynomials are extensions of Examples \ref{ex1}, \ref{ex2} and \ref{ex3} in the introduction.
\end{remark}

The ground field $K$ is assumed to be algebraically closed and of characteristic zero.

\begin{theorem}\label{thm1}
If $A$ is a Jordan block $J_{n+1,\lambda}$ with $\lambda\ne 0$, then if $n=0$ or $n=1$ then all semi-invariant forms are of the form $a_0^i$. If $n\ge 2$, then we have semi-invariant forms $p_2(a_0,a_1,a_2)$, $p_3(a_0,a_1,a_2, a_3),
\dots, p_{n}(a_0,\dots, a_{n})$ of alternative degrees $2$ and $3$ constructed from lemma \ref{lem1} and lemma \ref{lem2} together with $p_1=a_0$ form a basis in the sense that any semi-invariant form $p$ is uniquely written as 
$$\frac{\sum c_{i_1,\dots, i_{n}}p_1^{i_1}\dots p_{n}^{i_{n}}}{a_0^m}$$
where $i_1+2i_2+3i_3+2i_4+\dots$ is fixed and $m\ge 0$ is an integer. 
\end{theorem}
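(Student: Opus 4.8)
The plan is to reduce the statement to a transparent coordinate change on the localized ring $R_{a_0}=K[a_0,\dots,a_n]_{a_0}=K[a_0^{\pm1},a_1,\dots,a_n]$, where $R=K[a_0,\dots,a_n]$. First I would invoke the first item of the Remark in Section~\ref{sec1}: a form is a semi-invariant of $J_{n+1,\lambda}$ if and only if it is invariant under $J:=J_{n+1,1}$, i.e. fixed by the substitution operator $T\colon p(a)\mapsto p(Ja)$, $a_i\mapsto a_i+a_{i-1}$. So it suffices to describe the ring $R^T$ of $T$-invariants. The degenerate cases are immediate: for $n=0$ every form in the single variable $a_0$ is $c\,a_0^i$, and for $n=1$ one checks directly that the $T$-invariant forms are exactly the $c\,a_0^i$.

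For $n\ge 2$ the device is the slice $u:=a_1/a_0\in R_{a_0}$, which satisfies $T(u)=(a_1+a_0)/a_0=u+1$, while $T(a_0)=a_0$ and, by Lemmas~\ref{lem1} and~\ref{lem2}, $T(p_j)=p_j$ for $2\le j\le n$ (each $p_j$ is constructed from $J_{j+1,1}$, and $T$ restricted to $K[a_0,\dots,a_j]$ is the corresponding smaller substitution). I would then establish the coordinate identity
$$R_{a_0}=K[a_0^{\pm1},u,p_2,\dots,p_n].$$
This is where the explicit leading terms from the two construction lemmas are essential: each $p_j$ involves only $a_0,\dots,a_j$, and its unique monomial containing $a_j$ is $c_j\,a_j\,a_0^{d_j-1}$ with $c_j\ne 0$ ($d_j=2,\ c_j=2$ for even $j$; $d_j=3,\ c_j=-j$ for odd $j$). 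Hence one solves triangularly: $a_0$ is given, $a_1=a_0u$, and inductively $a_j=\big(p_j-g_j(a_0,\dots,a_{j-1})\big)/(c_j\,a_0^{d_j-1})$, so every $a_i$ lies in $K[a_0^{\pm1},u,p_2,\dots,p_n]$. These $n+1$ elements therefore generate $R_{a_0}$, a finitely generated $K$-domain of Krull dimension $n+1$; the evident surjection from $K[x_0^{\pm1},y,z_2,\dots,z_n]$ onto $R_{a_0}$ must then be an isomorphism (a surjection of affine domains of equal dimension has trivial kernel), which simultaneously gives the algebraic independence of $a_0,u,p_2,\dots,p_n$.

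In these coordinates $T$ is the translation $u\mapsto u+1$ fixing $a_0,p_2,\dots,p_n$. Writing an invariant as $f=\sum_k f_k u^k$ with $f_k\in K[a_0^{\pm1},p_2,\dots,p_n]$ and comparing coefficients in $T(f)=f$ forces $k\,f_k=0$ for the top index $k$, so in characteristic zero $f$ is independent of $u$; this yields
$$R_{a_0}^T=K[a_0^{\pm1},p_2,\dots,p_n].$$
Finally, any semi-invariant form $p$ lies in $R^T\subseteq R_{a_0}^T$, hence is a Laurent polynomial in $a_0=p_1$ and a polynomial in $p_2,\dots,p_n$; collecting the negative powers of $a_0$ into one denominator gives $p=\big(\sum_I c_I\,p_1^{i_1}\cdots p_n^{i_n}\big)/a_0^{m}$. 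Homogeneity of $p$ makes the numerator a form, which is precisely the condition that the weighted sum $i_1+2i_2+3i_3+2i_4+\cdots$ (the degree of $p_1^{i_1}\cdots p_n^{i_n}$) be constant, and uniqueness — for the minimal $m$, equivalently $p_1\nmid$ numerator — is immediate from the algebraic independence proved above.

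I expect the main obstacle to be the coordinate identity $R_{a_0}=K[a_0^{\pm1},u,p_2,\dots,p_n]$: everything hinges on the precise leading-term data of Lemmas~\ref{lem1} and~\ref{lem2}, which is exactly what converts an a priori opaque invariant-theoretic problem into a plain translation action. The characteristic-zero hypothesis is used only at the very end, to conclude that a polynomial in $u$ fixed by $u\mapsto u+1$ must be constant.
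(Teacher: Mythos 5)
Your proof is correct, but it takes a genuinely different route from the paper's. The paper proves Theorem \ref{thm1} by a double induction: an outer induction on the block size $n$ and an inner induction on the $a_n$-degree $m$ of the semi-invariant, written as $p=\sum_{i=0}^m h_i(a_0,\dots,a_{n-1})a_n^i$. Comparing the top $a_n$-coefficient in $p(Ja)=p(a)$ shows that $h_m$ is invariant for the smaller block, and then $q(a)=(c\,a_0^r)^m p(a)-h_m(a)\,p_n(a)^m$ is an invariant of strictly smaller $a_n$-degree, which closes the inner induction; algebraic independence is argued inductively from the fact that $p_n$ is the only generator containing $a_n$. You instead localize at $a_0$ once and for all, use the leading-term data of Lemmas \ref{lem1} and \ref{lem2} to solve triangularly for $a_2,\dots,a_n$, and identify $K[a_0,\dots,a_n]_{a_0}=K[a_0^{\pm1},u,p_2,\dots,p_n]$ with $u=a_1/a_0$, so that the action becomes the translation $u\mapsto u+1$ fixing the other coordinates and the invariant ring is read off in one stroke; independence comes from a Krull-dimension count on a surjection of affine domains. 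Both arguments pivot on exactly the same input (the unique $a_j$-monomial $2a_ja_0$, resp.\ $-j\,a_ja_0^2$, with nonzero coefficient), but yours buys the cleaner structural statement that the localized invariant ring is precisely $K[a_0^{\pm1},p_2,\dots,p_n]$, which makes the denominator $a_0^m$ conceptually inevitable rather than an artifact of the induction, while the paper's argument is more elementary (no localization or dimension theory) and serves as a template that is reused essentially verbatim for Theorems \ref{U} and \ref{thm2}. One small inaccuracy in your closing remark: characteristic zero is not used ``only at the very end,'' since it is also what guarantees the leading coefficients $2$ and $-j$ are nonzero (a point the paper makes explicitly); your proof does invoke $c_j\neq 0$ correctly where needed, so this is a misstatement about the hypothesis, not a gap.
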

\begin{proof}
The case $n=0$, is trivial. Now let $n=1$. Assume that $f(a_0,a_1)$ is a semi-invariant form of degree $m$, with $f(a_0, a_1+a_0)=f(a_0,a_1)$. Hence $f(a_0,a_1)=f(a_0,a_1+ka_0)$ for all integers $k\ge 0$ so if we fix $a_0\ne 0$, the polynomial is a constant (since it takes infinitely many equal values), so $f(a_0,a_1)=c a_0^m$.

Now we prove the theorem by induction on $n$. It is easy to check that $p_1,\dots,p_{n}$ are algebraically independent. Note that by the induction hypothesis $p_1,\dots, p_{n-1}$ are algebraically independent and with variables $a_0,\dots, a_{n-1}$. Now $p_{n}$ has variable $a_{n}$ with non-zero coefficient (this follows from the fact that $K$ is of characteristic zero) and hence $p_1,\dots,p_{n}$ are algebraically independent. Let $p(a_0,\dots,a_n)$ be a semi-invariant polynomial, write it as
$$p=\sum_{i=0}^m h_i(a_0,\dots,a_{n-1})a_{n}^i.$$
We prove the theorem by another induction on $m$. If $m=0$, then $p$ is a semi-invariant for the $J_{n,\lambda}$ and hence by the induction assumption for $n-1$, it has the required representation in term of $p_1,\dots,p_{n-1}$.

 Now assume that the desired representation is proved for semi-invariant forms with $a_{n}$-degree less than $m$ and we want to prove it for semi-invariant forms of $a_{n}$-degree equals to $m$. By comparing the highest power of $a_{n}$ in both sides of $p(J a)=p(a)$ it follows that $h_m(J a)= h_m(a)$, where here $x=(a_0,\dots, a_{n-1})$. Again, by induction hypothesis for $n-1$, $h_m$ has the required representation in terms of $p_1,\dots, p_{n-1}$. Note that by the construction of $p_{n}(a)$ in Lemma \ref{lem1} and Lemma \ref{lem2}, one has
$$p_{n}(a)=ca_{n}a_0^r+s(a_0,\dots ,a_{n-1})$$
where $r=1$ if $n$ is even and $r=2$ if $n$ is odd, $c$ is a non-zero element (since characteristic is zero) and $p_{n}(J a)= p_{n}(a)$. In fact,
\[c=\begin{cases}
 -n\quad \mbox{for odd}\: n\\
 2\quad\quad \mbox{for even}\: n
 \end{cases}
 \]

It follows that
$$q(a)=(ca_0^r)^mp(a)-h_m(a)(p_{n}(a))^m$$
is an invariant form for $J_{n+1}$ and $a_{n}$-degree less than $m$. So by the induction hypothesis, $q(a)$ has the desired representation in terms of $p_1,\dots, p_{n}$, since we saw that $h_m(a)$ also has such a representation, solving for $p$ will give us the desired representation for $p$. The theorem is proved.
\end{proof}
\begin{remark} There are many more examples of basis for $A$ besides the one given in Theorem \ref{thm1}. In fact any set of semi-invariants $f_i\in K[a_0,\dots, a_i]$ with only one term with $a_i$ and of the form $c_i a_0^r a_i$ with $c_i\ne 0$ and $f_1=a_0$ is a basis in the above sense. The same exact proof as above works.
\end{remark}

A similar theorem can be stated for the algebra of $U$ invariant polynomials in $K[a_0,\dots, a_n]$.

\begin{theorem}\label{U}
Let $q_1=a_0$, and $q_i\in K[a_0,\dots, a_i]$ for $i=2,\dots, n$ be $U$ invariant homogeneous polynomials and with only one term with $a_i$ and of the form $c_i a_0^r a_i$ and $c_i\ne 0$. Then, any $U$ invariant polynomial is of the form 
$$\frac{P(q_1,\dots, q_n)}{a_0^m}.$$
for a polynomial $P$.
\end{theorem}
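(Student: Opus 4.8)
The plan is to mirror the proof of Theorem \ref{thm1} almost verbatim, replacing $J_{n+1}$-invariance by $U$-invariance (annihilation by $D$) throughout, and to run a double induction: an outer induction on the number of variables $n$ and an inner induction on the $a_n$-degree $m$ of the polynomial. The argument is in fact slightly cleaner than that of Theorem \ref{thm1}, because $D$ is a derivation and so the Leibniz rule makes all the bookkeeping transparent. Throughout, write $D'=\sum_{i=1}^{n-1} a_{i-1}\frac{\partial}{\partial a_i}$ for the operator defining $U$-invariance on $K[a_0,\dots,a_{n-1}]$, so that on $K[a_0,\dots,a_n]$ one has the decomposition $D=D'+a_{n-1}\frac{\partial}{\partial a_n}$. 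For the base case $n=1$ (the case $n=0$ being trivial), a $U$-invariant $p$ satisfies $Dp=a_0\frac{\partial}{\partial a_1}p=0$, and since $K[a_0,a_1]$ is a domain this forces $\frac{\partial}{\partial a_1}p=0$, hence $p\in K[a_0]=K[q_1]$, which is of the required form.

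For the inductive step, write a $U$-invariant $p\in K[a_0,\dots,a_n]$ as $p=\sum_{i=0}^m h_i a_n^i$ with each $h_i\in K[a_0,\dots,a_{n-1}]$ and $h_m\neq 0$. Applying $D$ and using $D(a_n^i)=i\,a_{n-1}a_n^{i-1}$ gives
\[ Dp=\sum_{i=0}^m (D'h_i)\,a_n^i+\sum_{i=1}^m i\,a_{n-1}h_i\,a_n^{i-1}. \]
The coefficient of $a_n^m$ in $Dp$ is exactly $D'h_m$ (the second sum only reaches power $a_n^{m-1}$), so $Dp=0$ forces $D'h_m=0$; that is, the top coefficient $h_m$ is a $U$-invariant in the fewer variables $a_0,\dots,a_{n-1}$. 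By the outer induction hypothesis, $h_m=P_0(q_1,\dots,q_{n-1})/a_0^{m_0}$ for some polynomial $P_0$. The base case $m=0$ of the inner induction is the same remark applied to $p$ itself, since then $p=h_0\in K[a_0,\dots,a_{n-1}]$ is $U$-invariant in fewer variables.

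Next I would run the subtraction trick. Since $q_n=c_n a_0^r a_n+s$ with $s\in K[a_0,\dots,a_{n-1}]$ and $c_n\neq 0$, the power $q_n^m$ has $a_n$-degree $m$ with leading coefficient $c_n^m a_0^{rm}$, so
\[ q:=(c_n a_0^r)^m\,p-h_m\,q_n^m \]
has $a_n$-degree strictly less than $m$. Moreover $q$ is again $U$-invariant: because $D(a_0)=0$ we have $D[(c_n a_0^r)^m p]=(c_n a_0^r)^m Dp=0$, while Leibniz gives $D[h_m q_n^m]=(D'h_m)q_n^m+m\,h_m q_n^{m-1}(Dq_n)=0$ using $D'h_m=0$ and $Dq_n=0$. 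By the inner induction hypothesis $q=P_1(q_1,\dots,q_n)/a_0^{m_1}$, and then solving $p=\bigl(q+h_m q_n^m\bigr)/(c_n a_0^r)^m$ exhibits $p$ as a polynomial in $q_1,\dots,q_n$ divided by a power of $a_0=q_1$, which is the desired form.

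The only point needing genuine care, and hence the main obstacle, is the simultaneous verification that the subtraction lowers the $a_n$-degree while preserving $U$-invariance; both facts hinge on the single identity $D'h_m=0$ extracted from the expansion of $Dp$, together with the defining hypothesis that $q_n$ contains exactly one term involving $a_n$, namely $c_n a_0^r a_n$. Everything else is a routine transcription of the proof of Theorem \ref{thm1}. Note finally that, as in that theorem, the $q_i$ are algebraically independent, but since only the existence of $P$ is asserted here, algebraic independence is not actually needed for the argument.
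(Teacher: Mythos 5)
Your proposal is correct and takes essentially the same approach as the paper, which proves Theorem \ref{U} by transcribing the proof of Theorem \ref{thm1}: the same expansion $p=\sum_{i=0}^m h_i a_n^i$, the same extraction of $U$-invariance of $h_m$ from the coefficient of $a_n^m$ in $Dp$, and the same subtraction $Q=(c_n a_0^r)^m p - h_m q_n^m$ inside a double induction on $n$ and $m$. The only differences are cosmetic: you spell out the Leibniz-rule verification that $Q$ is $U$-invariant and give a direct base case ($a_0\,\partial p/\partial a_1=0$ forces $p\in K[a_0]$), details the paper leaves implicit.
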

\begin{proof} This is exactly like the previous theorem. We only need to replace $J$-invariant with $U$-invariant. For example if $q$ is $U$ invariant and 
$$q=\sum_{i=0}^m h_i(a_0,\dots, a_{n-1})a_n^i$$
then $h_m$ is $U$ invariant, this follows if we apply $D$ to both sides and look at the coefficient of $a_n^m$. Hence $h_m$ has the desired representation in terms of $q_1,\dots, q_{n-1}$ by induction. Now
$$Q(a)=(c_n a_0^r)^m q(a)-h_m(a)(q_n(a))^m$$
is $U$ invariant and has a $a_n$-degree less than $m$, hence similar as before the theorem follows by the induction hypothesis. 
\end{proof}

\begin{remark}\label{procesi}
In \cite{KP}, Kraft and Procesi have given another set of U invariants of equal weight and degree that satisfy the condition of the above theorem. They are given by the following formula.

$$(-1)^kC_k=(1-k)\frac{a_1^k}{k!}+\sum_{j=2}^k \frac{(-1)^j}{(k-j)!} a_0^{j-1}a_1^{k-j}a_j$$
This basis can be easily calculated as polynomials in terms of the U invariants $q_k$ that are the weight $k$ part of $p_k$. The description is inductive and is as follows.
$$2C_2=q_2$$
$$3C_3=q_3$$
$$2C_4=a_0^2q_4-C_2^2$$
$$5C_5=-a_0^2q_5-C_2C_3$$
and more generally if $n=2k$ is even then 
$$2(-1)^kC_n= a_0^{n-2}q_n-C_k^2- \sum_{j=2}^{k-1} (-1)^{j}C_jC_{n-j}$$
and if $n=2k-1$ is odd then
$$n(-1)^kC_n= a_0^{n-3}q_k-\sum_{j=2}^{k-1} (-1)^{j+k}(k-2j)C_jC_{k-j}.$$
In fact polynomials of $q_1,\dots, q_n$, produce a much larger space of $U$ invariants than polynomials of $C_1,\dots, C_n$.
\end{remark}

\begin{remark}
In studying the semi-invariants of a matrix, we arrived at the definition of the polynomials $p_n$. We were quite surprised to see that the weight $n$ parts of $p_n$ appeared (up to a constant factor and changing his variables $a_i$ to $i!a_i$) in Lecture XIX (page 61) of Hilbert's classical book \cite{Hil}. He used them to construct covariants of degree 2 and degree 3 of binary forms. This connection will be explained more in section 3. He also had a theorem similar to the theorem \ref{U} above for the generation of all covariants in Lecture XX in loc. cit. 
\end{remark}

\begin{theorem}\label{thm2}
Let $A=\mbox{diag}(J_{n_1,\lambda_1},\dots, J_{n_k,\lambda_k})$ be a non-singular square matrix in Jordan normal form, with $l>0$ blocks of size $>1$, say $J_{n_{i_j},\lambda_{i_j}}$ for $j=1,\dots, l$. Then by the previous theorem each Jordan block of size $n_i>1$ will give $n_i-1$ semi-invariant polynomials and so together we get $n-l$ semi-invariant polynomials $p_1,\dots, p_{n-l}$ and then by using lemma \ref{lem3} we construct $l-1$ quadratic semi-invariant forms $p_{n-l+1},\dots, p_{n-1}$. Let $p_i(Aa)=\mu_i p(a)$ for $i=1,\dots, n-1$. Then, any semi-invariant polynomial $p(a)$ with $p(Aa)=\alpha p(a)$ can be written uniquely as 
$$\frac{\sum c_{i_1,\dots, i_{n-1}}p_1^{i_1}\dots p_{n-1}^{i_{n-1}}}{\prod_{j=1}^la_{0,j}^{m_j}}$$
where $m_j\ge 0$ are integers and for all $i_1,\dots, i_{n-1}$ with non zero $c_{i_1,\dots,i_{n-1}}$ we must have
$\mu_1^{i_1}\dots \mu_{n-1}^{i_{n-1}}=\alpha \lambda_{i_1}^{m_1}\dots \lambda_{i_l}^{m_l}$. 
\end{theorem}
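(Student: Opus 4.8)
The plan is to run an induction that parallels the proof of Theorem~\ref{thm1}, but which now eliminates variables spread across several Jordan blocks, using the cross-block quadrics of Lemma~\ref{lem3} to supply exactly the reduction step that the single-block forms cannot provide. Throughout I adopt the ordering of Lemma~\ref{lem3}, so the blocks of size $>1$ are $J_{n_1,\lambda_1},\dots,J_{n_l,\lambda_l}$ with $n_1\ge\cdots\ge n_l\ge 2$ and the remaining blocks have size $1$. Since the total-degree homogeneous components of a semi-invariant are again semi-invariant with the same multiplier, I may assume $p$ is a form. Every generator is homogeneous in the block-multidegree (each within-block form involves one block only, and each cross quadric $a_{0,i}a_{1,i+1}-a_{0,i+1}a_{1,i}$ has multidegree one in blocks $i$ and $i+1$), so each $p_s$ has a well-defined multiplier $\mu_s$. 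Consequently the arithmetic constraint $\mu_1^{i_1}\cdots\mu_{n-1}^{i_{n-1}}=\alpha\lambda_1^{m_1}\cdots\lambda_l^{m_l}$ is simply the equality of the multipliers of the two sides of the asserted identity once the denominator is cleared, and since at every step below I only combine semi-invariants that share a single multiplier, this holds term by term automatically. I induct on the number $n$ of variables, the base case of one non-trivial block being Theorem~\ref{thm1}.

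First I dispose of size-$1$ blocks. If some block, say the last, has size $1$ with variable $a_{0,k}$ and multiplier $\lambda_k$, I write $p=\sum_i h_i\,a_{0,k}^{\,i}$ with the $h_i$ free of $a_{0,k}$; comparing powers of $a_{0,k}$ in $p(Aa)=\alpha p(a)$ and using that $A$ merely scales $a_{0,k}$ by $\lambda_k$ shows each $h_i$ is a semi-invariant, with multiplier $\alpha/\lambda_k^{\,i}$, of the smaller matrix $A'$ obtained by deleting block $k$. The generators of $A'$ are precisely those of $A$ with $a_{0,k}$ removed, since a size-$1$ block enters no cross quadric; so induction represents each $h_i$, and as $a_{0,k}$ is itself one of the generators, multiplying back gives the representation of $p$. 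I may therefore assume every block has size $\ge 2$.

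The heart of the argument is the reduction of the top variable $a_{n_l-1,l}$ of the smallest block, by an inner induction on its degree $m$. Writing $p=\sum_i h_i\,a_{n_l-1,l}^{\,i}$ and comparing top powers shows the leading coefficient $h_m$ is a semi-invariant of the matrix obtained by truncating block $l$ to size $n_l-1$, hence is already represented by the outer induction. If $n_l\ge 3$, the top within-block form $f$ of block $l$ contains, by Lemmas~\ref{lem1} and~\ref{lem2}, the term $c\,a_{n_l-1,l}\,a_{0,l}^{\,r}$ with $c\ne 0$, and, exactly as in Theorem~\ref{thm1}, the semi-invariant
\[
q=(c\,a_{0,l}^{\,r})^{m}\,p-h_m\,f^{\,m}
\]
has strictly smaller degree in $a_{n_l-1,l}$, finishing this case with a denominator $a_{0,l}$. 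If instead $n_l=2$, block $l$ provides no within-block form touching its top variable $a_{1,l}$; here I invoke the cross quadric $p_{n-1}=a_{0,l-1}a_{1,l}-a_{0,l}a_{1,l-1}$, whose only term containing $a_{1,l}$ is $a_{0,l-1}a_{1,l}$. A short check of multipliers shows that $(a_{0,l-1})^{m}p$ and $h_m\,p_{n-1}^{\,m}$ both have multiplier $\alpha\lambda_{l-1}^{\,m}$, so
\[
q=(a_{0,l-1})^{m}\,p-h_m\,p_{n-1}^{\,m}
\]
is again a semi-invariant, and its top $a_{1,l}$-terms cancel, lowering the $a_{1,l}$-degree. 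Solving $p=(q+h_m\,p_{n-1}^{\,m})/(a_{0,l-1})^{m}$ and applying both inductions yields the representation, now with the extra denominator factor $a_{0,l-1}$. I expect this $n_l=2$ case to be the crux and the genuinely new phenomenon: it is exactly where a cross quadric, rather than a single-block form, carries the variable being eliminated, and it accounts both for the unavoidable denominators seen in Example~\ref{ex5} and for the shape $\prod_j a_{0,j}^{m_j}$ of the denominator.

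It remains to establish algebraic independence of $p_1,\dots,p_{n-1}$, from which uniqueness follows. The within-block forms of block $i$ have distinct leading variables $a_{0,i},a_{2,i},\dots,a_{n_i-1,i}$, omitting only $a_{1,i}$, while the cross quadric $p_{n-l+i}$ expresses $a_{1,i+1}$ rationally in terms of $a_{1,i}$ and the generators, with denominator $a_{0,i}$. Hence $p_1,\dots,p_{n-1}$ together with the single variable $a_{1,1}$ generate the whole field of fractions; being $n-1$ elements that complement one transcendental in a field of transcendence degree $n$, they are algebraically independent. Uniqueness then follows in the usual manner: the permitted denominators $a_{0,j}$ are themselves among the generators, so two competing expressions for $p$, after clearing denominators, become a polynomial relation among algebraically independent quantities and must agree coefficient by coefficient.
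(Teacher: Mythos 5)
Your proof is correct. For the existence of the representation, your argument is essentially the paper's own: the same outer induction on the size of $A$ combined with an inner induction on the degree $m$ in the top variable of a chosen block of size $>1$; the same comparison of top coefficients showing that $h_m$ is a semi-invariant of the truncated matrix; and the same two elimination steps, namely $q=(c\,a_{0,l}^r)^m p-h_m f^m$ using the forms of Lemma \ref{lem1} and Lemma \ref{lem2} when the chosen block has size $\ge 3$, and $q=a_{0,l-1}^m p-h_m\,(a_{0,l-1}a_{1,l}-a_{0,l}a_{1,l-1})^m$ using the mixed quadric of Lemma \ref{lem3} when it has size $2$ (the paper takes the last block, arranged to have size $>1$ by a cyclic shift, and pairs it with the largest earlier block of size $>1$; your explicit peeling off of size-$1$ blocks is an equivalent normalization). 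Where you genuinely diverge from the paper is the algebraic independence of $p_1,\dots,p_{n-1}$: the paper invokes the Jacobian criterion over $K(a_1,\dots,a_n)$ and then runs a somewhat delicate cancellation argument on the rows contributed by the mixed forms, whereas you observe that each within-block generator carries its top variable only in a term $c\,a_{0,i}^r a_{j,i}$ and each mixed quadric carries $a_{1,i+1}$ only in the term $a_{0,i}a_{1,i+1}$, so that $p_1,\dots,p_{n-1}$ together with the single variable $a_{1,1}$ generate the entire field $K(a)$; since that field has transcendence degree $n$ over $K$, these $n$ elements form a transcendence basis, and in particular the $p_i$ are algebraically independent. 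Your counting argument is shorter, avoids the Jacobian machinery, and makes transparent why exactly one variable ($a_{1,1}$) must be adjoined to the $n-1$ generators, while the paper's Jacobian computation stays closer to the explicit linear-algebra data of the forms; both routes use characteristic zero only through the non-vanishing of the leading coefficients. Your brief treatments of the multiplier constraint and of uniqueness are at the same level of detail as the paper's and are justified once independence is established.
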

\begin{proof}
The proof of algebraic independence of $p_1,\dots, p_{n-1}$ is by an application of the Jacobian criterion, see \cite{L}, chapter one, section 11.4. It says that 
\\
\\
{\it{A set of $m$ polynomials $p_1,\dots, p_m$ in $K[a_1,\dots, a_n]$ with coefficients in a field $K$ of characteristic zero and $m\le n$ is algebraically independent, if and only if, the $m\times n$ Jacobian matrix $[\frac{\partial p_i}{\partial a_j}]$ as a matrix in the field $L=K(a_1,\dots, a_n)$ is of rank $m$.}}
\\
\\
 Now, using this fact, we prove algebraic independence. First of all, we showed in Theorem \ref{thm1} that the forms obtained from each Jordan block are algebraically independent. It is also clear that forms from different blocks since they contain a disjoint set of variables are algebraically independent. The only issue, if any, might happen due to the existence of forms arising from Lemma \ref{lem3} by combining two consecutive Jordan blocks of size $>1$, we call these mixed forms. So for the sake of contradiction, assume that a linear combination of the rows of the Jacobian matrix above is zero. It must contain a row, contributed from one of the mixed forms. Let $p_i$ be the first (with the order given by the variables) such mixed form whose corresponding row appears in the assumed linear relation amongst the rows of the Jacobian matrix. Assume that it combines the two consecutive blocks $J_{n_j,\lambda_j}$ and $J_{n_{j'},\lambda_{j'}}$ of size $>1$. The entry corresponding to $\frac{\partial p_i}{\partial a_{1,j}}$, can only be cancelled by the rows corresponding to the quadratic or cubic forms for the block $J_{n_{j'},\lambda_{j'}}$. Now let $p_r$ be the first of these forms appearing in the assumed linear relation. Let the variable in this form with the largest index be $a_{s,j'}$. The term $\frac{\partial p_r}{\partial a_{s,j'}}$ is a monomial with $a_{0,j'}$ and can not be canceled by any other row of the Jacobian matrix. This contradiction, proves the algebraic independence of $p_1,\dots, p_{n-1}$.

Let us now, prove the desired representation of a semi-invariant form $p(a)$ with multiplier $\lambda$ by an induction on $n$, the size of matrix $A$. Write
$$p(a)=\sum_{i=0}^m h_i(a_{0,1},\dots,a_{n_k-2,k})a_{n_k-1,k}^i.$$
 With a cyclic shift of variables, without loss of generality, we may assume that $n_k>1$. If $m=0$, then since $p$ is with $n-1$ variables and is semi-invariant for the sub-matrix $A'$ obtained by removing the last row and column of $A$, the desired representation follows by the induction hypothesis. Now we prove the theorem with another induction on $m$.
 If $n_k>2$, then the same proof as before using $p_{n_k-1}$ will reduce $m$ and finish the proof by the induction hypothesis on $m$. If $n_k=2$, and all $n_1=\dots=n_{k-1}=1$ then the statement of the theorem becomes trivial, using a similar technique used in the previous theorem for $n=2$. Finally if $n_k=2$ and the index $i<k$ that $n_i>1$ exists, let $i$ be the largest such index. Use $r(a)=a_{0,i}a_{1,k}- a_{1,i}a_{0,k}$ to reduce the power of $a_{1,k}$ in $p(a)$ as follows. By comparing the highest power of $a_{1,k}$ in $p(Aa)$ and $p(a)$, it follows that $h_m(A'a)=h_m(a)$. Here $A'$ is obtained from $A$ by removing its last row and column. Now define
$$q(a)=a_{0,i}^mp(a)-r(a)^mh_m(a).$$
Then $q(a)$ is an invariant form whose $a_{1,k}$-degree is less than $m$. Therefore, since by the hypothesis of induction both $h_m(a)$ and this form have the desire representation in terms of $p_1,\dots, p_{n-1}$, hence $p$ can be represented as claimed.
\end{proof}

We mention the following elementary observation. If two matrices $A$ and $B$ are similar, i.e. if there is an invertible matrix $S$ such that $A=SBS^{-1}$, then $p(a)$ is a semi-invariant for $B$ with multiplier $\lambda$ if and only if $q(a)=p(S^{-1}a)$ is a semi-invariant for $A$ with multiplier $\lambda$. This is because
$$q(Aa)=p(S^{-1}Aa)=p(BS^{-1}a)=p(S^{-1}a)=q(a).$$
So if the ground field is algebraically closed, any matrix $A$ is similar to a Jordan normal form, and then we may use Theorem \ref{thm2} to construct the corresponding semi-invariant generators for $A$.

\section{Relation with invariants and covariants of binary forms}

This section spells out the relation between our work, with the classical invariant theory of 19th century from the classical book by Hilbert \cite{Hil}. First, we recall the definition of an invariant polynomial ${\mathcal I}(a_0,\dots, a_n)$ of $n+1$ variables of the binary form
$$f(X_1,X_2)=\sum_{i=0}^n \frac{a_i}{(n-i)!}X_1^{n-i}X_2^i.$$
If a linear change of variables $X_1=\alpha_{11}X_1'+\alpha_{12}X_2'$ and $X_2=\alpha_{21}X_1'+\alpha_{22}X_2'$ with non-zero determinant $\delta=\alpha_{11}\alpha_{22}-\alpha_{12}\alpha_{21}$ is performed on $f$, it will become
$$f'(X',Y')=f(\alpha_{11}X_1'+\alpha_{12}X_2', \alpha_{21}X_1'+\alpha_{22}X_2')=\sum_{i=0}^n \frac{a_i'}{(n-i)!}{X_1'}^{n-i}{X_2}'^i.$$
We say that ${\mathcal I}(a_0,\dots, a_n)$ is an invariant form of weight $g$ if
$${\mathcal I}(a_0',\dots, a_n')=\delta^g {\mathcal I}(a_0,\dots, a_n).$$
Similarly a covariant polynomial ${\mathcal C}(a_0,\dots, a_n,X_1,X_2)$ of weight $g$ is a form such that 
$${\mathcal C}(a_0',\dots, a_n',X_1',X_2')=\delta^g {\mathcal C}(a_0,\dots, a_n,X_1,X_2).$$

The simplest covariant is $f$ itself, which is a covariant of weight $0$. Any invariant of weight $g$, is a covariant that has no $X_1$ or $X_2$ in it. Consider the generic polynomial 
$${\mathcal C}(a_0,\dots,a_n,X_1,X_2)=\sum_{i=0}^m C_i(a_0,\dots, a_n)X_1^{m-i}X_2^i.$$
It is easy to see that the weight $g$ covariance property for $\mathcal C$ only for the diagonal change of variables $X_1=\kappa X_1'$ and $X_2=\lambda X_2'$, is equivalent to the condition that all $C_i$'s are homogeneous of the same degree and each term of $C_i$ is of weight $g+i$. In particular an invariant form of weight $g$ is homogeneous of weight $g$. The covariance property for the matrices of the form $\left[\begin{array}{cc}1&\lambda\\0&1\end{array}\right]$ amounts to 
$$D{\mathcal C}=X_2\frac{\partial {\mathcal C}}{\partial X_1}.$$
Here as before
$$D=\sum_{i=1}^n a_{i-1}\frac{\partial}{\partial a_i}.$$
In particular $DC_0=0$, so for any invariant $\mathcal I$, we have $D{\mathcal I}=0$. Finally the covariance property for the matrices of the form $\left[\begin{array}{cc}1&0\\\lambda&1\end{array}\right]$ amounts to 
$$\Delta_n {\mathcal C}=X_1\frac{\partial {\mathcal C}}{\partial X_2}.$$
Here the differential operator $\Delta$ is given by
$$\Delta_n=\sum_{i=0}^{n-1} (n-i)(i+1)a_{i+1}\frac{\partial}{\partial a_i}.$$
In particular $\Delta C_m=0$, so for any invariant $\mathcal I$, we have $\Delta{\mathcal I}=0$. Since these three classes of functions generate all matrices by multiplication, we see that these three conditions are equivalent to the covariance property and hence the invariance property. Conversely, let $C_0$ be an isobaric homogeneous form in terms of $a_0,\dots, a_n$ of degree $d$ and weight $g$, with $DC_0=0$. Then there is a unique covariant of the form
$${\mathcal C}= C_0X_1^m+C_1X_1^{m-1}X_2+\dots +C_mX_2^m$$
where $m$ is the smallest non-negative integer such that $\Delta_n^{m+1}C_0=0$ (which happens to be $nd-2g$) . In fact from 
$$\Delta {\mathcal C}=X_1\frac{\partial C}{\partial X_2}$$
it follows that $C_i=i\Delta C_{i-1}$ for $i=1,\dots, m$ and hence
$$C_i=\frac{\Delta^i C_0}{i!}$$
On the other hand, it is not hard to show that with this definition $D {\mathcal C}=X_2\frac{\partial {\mathcal C}}{\partial X_1}$. So $\mathcal C$ is the unique covariant with $X_1^m$ coefficient equals to $C_0$. One calls $C_0$ the source for $\mathcal C$.
 One can easily show that if ${\mathcal A}$ is a homogeneous and isobaric form of degree $d$ and weight $g$ in $K[a_0,\dots, a_n]$, then 
 \begin{equation}\label{com}
(D\Delta_n-\Delta_n D)({\mathcal A})=(nd-2g){\mathcal A}.
\end{equation}
This follows by evaluating both sides on $a_i$ and noticing that both sides are derivations. An easy induction on $k$ implies the following more general commutator relation. 
\begin{equation}\label{com2}
(D^k\Delta_n -\Delta_n D^k)({\mathcal A})=k(nd-2g+k-1)D^{k-1}({\mathcal A}).
\end{equation}
\section{On the relation between semi-invariants and U invariants}

We have learned the following beautiful piece of 19th century mathematics from Kraft and Procesi \cite{KP}. The space of weight $g$ and degree $d$ homogeneous isobaric polynomials in $K[a_0,a_1,\dots]$ is denoted by $K[a]_{d,g}$. It is a finite dimensional space with a basis 
$$a_{h_1}\dots a_{h_d}$$
where $h_1\ge h_2\ge \dots\ge h_d\ge 0$ and $h_1+\dots+h_d=g$. Its dimension is the number of partitions of $g$ which is equal to the number of $(k_1,\dots,k_d)$ with $k_1\ge 0, \dots, k_d\ge 0$ and $k_1+2k_2+\dots +dk_d=g$. Now we construct a different basis for $K[a]_{d,g}$ as
$$U_{k_1,\dots, k_d}$$
indexed by such $(k_1,\dots, k_d)$. This is defined via the following slick method, that is due to Stroh from an 1890 paper \cite{S}. For variables $\lambda_1,\dots, \lambda_d$ define\
\begin{equation}\label{form0}
\pi_{d,g}=\sum_{\substack{h_1\ge 0,\dots, h_d\ge 0\\ h_1+\dots+h_d=g}} \lambda_1^{h_1}\dots\lambda_d^{h_d}a_{h_1}\dots a_{h_d}.
\end{equation}
Note that we have not imposed the condition of $h_1\ge \dots\ge h_d$, hence although $a_{h_1}\dots a_{h_d}$ is invariant under the permutation of $h_i$'s, its coefficient $\lambda_1^{h_1}\dots \lambda_d^{h_d}$ is not. Collecting its orbit under the action of the permutation group under one function that we call $m_{h_1,\dots, h_n}$ we arrive at the formula
\begin{equation}\label{form1}
\pi_{n,g}=\sum_{\substack{h_1\ge \dots\ge h_n\ge 0\\ h_1+\dots+h_n=g}} m_{h_1,\dots, h_n}a_{h_1}\dots a_{h_n}.
\end{equation}
Note that $m_{h_1,\dots, h_n}$ for all $h_1\ge \dots\ge h_d\ge 0$ and $h_1+\dots+h_d=g$ is a basis for the space of all symmetric homogeneous polynomials of degree $g$ and variables $\lambda_1,\dots, \lambda_d$. Another basis for this space is given using the symmetric elementary polynomials
$$e_k=\sum_{0\le i_1<\dots<i_k\le d} \lambda_{i_1}\dots\lambda_{i_k}$$
for $k=1,\dots, d$. This basis is given by $e_1^{k_1}\dots e_d^{k_d}$ with $k_i\ge 0$ and $\sum_{i=1}^d ik_i=g$. So we may rewrite $\pi_{d,g}$ as
\begin{equation}\label{form2}
\pi_{d,g}=\sum_{\substack{k_1\ge 0,\dots, k_d\ge 0\\ k_1+2k_2+\dots+dk_d=g}} e_1^{k_1}\dots e_d^{k_d} U_{k_1,\dots, k_d}.
\end{equation}

We can write $U_{k_1,\dots, k_d}$ more explicitly as follows. Define the change of basis coefficients $\alpha_{h_1,\dots, h_d; k_1,\dots, k_d}$ as
$$m_{h_1\dots,h_d}=\sum \alpha_{h_1,\dots, h_d; k_1,\dots, k_d}e_1^{k_1}\dots e_d^{k_d}$$
where the sum is taken over all non-negative $k_i$'s with $\sum_{i=1}^d ik_i=\sum_{i=1}^d h_i$.

\begin{lemma}
The polynomials $U_{k_1,\dots, k_d}$ for $k_i\ge 0$ and $\sum_{i=1}^d ik_d=g$, form a basis for the space $K[a]_{d,g}$ of homogeneous polynomials of weight $g$ and degree $d$ in terms of $a_0,a_1,\dots$.
\end{lemma}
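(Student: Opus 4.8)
The plan is to exhibit the family $\{U_{k_1,\dots,k_d}\}$ as the image of the known monomial basis $\{a_{h_1}\cdots a_{h_d}\}$ under an invertible linear transformation, so that the basis property is simply inherited. First I would settle the dimension count. The monomials $a_{h_1}\cdots a_{h_d}$ with $h_1\ge\cdots\ge h_d\ge 0$ and $h_1+\cdots+h_d=g$ already form a basis of $K[a]_{d,g}$, and they are indexed by partitions of $g$ into at most $d$ parts. On the other hand, the tuples $(k_1,\dots,k_d)$ with $k_i\ge 0$ and $\sum_{i=1}^d i k_i=g$ that index the $U$'s correspond to partitions of $g$ into parts of size at most $d$. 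Conjugation of Young diagrams puts these two index sets in bijection, so there are exactly $\dim K[a]_{d,g}$ many $U_{k_1,\dots,k_d}$. Hence it suffices to prove that they are linearly independent, or equivalently that they span.

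Next I would make the relation between the two families explicit. Substituting the change-of-basis expansion $m_{h_1,\dots,h_d}=\sum_k \alpha_{h_1,\dots,h_d;k_1,\dots,k_d}\,e_1^{k_1}\cdots e_d^{k_d}$ into formula (\ref{form1}) for $\pi_{d,g}$ and comparing with formula (\ref{form2}), one reads off the coefficient of each monomial $e_1^{k_1}\cdots e_d^{k_d}$. Since $e_1,\dots,e_d$ are algebraically independent, these monomials are linearly independent over the coefficient ring $K[a_0,a_1,\dots]$, so the comparison is legitimate and yields
$$U_{k_1,\dots,k_d}=\sum_{\substack{h_1\ge\cdots\ge h_d\ge 0\\ h_1+\cdots+h_d=g}} \alpha_{h_1,\dots,h_d;k_1,\dots,k_d}\, a_{h_1}\cdots a_{h_d}.$$
Thus the passage from the monomial basis $\{a_{h_1}\cdots a_{h_d}\}$ to the family $\{U_{k_1,\dots,k_d}\}$ is governed by the transpose of the matrix $\alpha$.

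The heart of the argument is that $\alpha$ is invertible. This holds because $\alpha$ is exactly the matrix of change of basis, in the degree-$g$ homogeneous component of the symmetric polynomials in $\lambda_1,\dots,\lambda_d$, from the monomial symmetric functions $\{m_{h_1,\dots,h_d}\}$ to the products of elementary symmetric functions $\{e_1^{k_1}\cdots e_d^{k_d}\}$. That the first family is a basis is the definition of the $m_{h_1,\dots,h_d}$; that the second is a basis of the same space is the fundamental theorem of symmetric functions, together with the dimension match already noted. Two bases of a finite-dimensional space are related by an invertible matrix, so $\alpha$, and hence $\alpha^{T}$, is invertible. Consequently $\{U_{k_1,\dots,k_d}\}$ is obtained from a basis of $K[a]_{d,g}$ by an invertible linear transformation, and is therefore itself a basis.

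The only step that requires genuine care, rather than bookkeeping, is the claim that the weight-restricted products $e_1^{k_1}\cdots e_d^{k_d}$ with $\sum_{i=1}^d i k_i=g$ form a basis of the degree-$g$ symmetric polynomials; this rests on the algebraic independence of $e_1,\dots,e_d$ and on matching the count via conjugation of partitions. Once that is established, the identification of $\alpha$ as a change-of-basis matrix and the transfer of the basis property to the $U$'s are purely formal.
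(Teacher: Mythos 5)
Your proof is correct and takes essentially the same route as the paper: the paper invokes the abstract ``duality tensor'' lemma of Kraft--Procesi applied to the two expansions (\ref{form1}) and (\ref{form2}) of $\pi_{d,g}$, and the content of that lemma is precisely your explicit computation --- invertibility of the change-of-basis matrix $\alpha$ between the monomial symmetric functions $m_{h_1,\dots,h_d}$ and the products $e_1^{k_1}\cdots e_d^{k_d}$, transferred to the coefficients $U_{k_1,\dots,k_d}$ via uniqueness of the expansion. Your conjugation-of-partitions dimension count is a pleasant consistency check but is not strictly needed, since exhibiting the $U$'s as the image of the monomial basis $a_{h_1}\cdots a_{h_d}$ under an invertible matrix already yields the basis property.
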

\begin{proof} We recall the proof given in \cite{KP}. A duality tensor in $V\otimes W$, where $V$ and $W$ are isomorphic finite dimensional vector spaces is a tensor of the form $\pi=\sum_{i=1}^n a_i\otimes b_i$. where $a_1,\dots, a_n$ is a basis for $V$ and $b_1,\dots, b_n$ is a basis for $W$. It is easy and standard to see if we rewrite $\pi=\sum_{i=1}^n a_i'\otimes b_i'$ where $a_1',\dots, a_n'$ form a basis for $V$ then $b_1',\dots, b_n'$ form a basis for $W$. This is essentially equivalent to the fact that if a linear map sends one basis to a basis, then it sends any basis to another basis. Now use this result to two forms of $\pi_{d,g}$ given by equations \ref{form1} and \ref{form2}.
\end{proof}

Now why do we take so much trouble to come up with a new basis for $K[a]_{d,g}$, while we have a very simple basis $a_{h_1}\dots a_{h_d}$ with $h_1\ge \dots \ge h_d\ge 0$ and $h_1+\dots+h_d=g$? The new basis, $U_{k_1,\dots, k_d}$ has a very interesting and amazing property with respect to the action of the operator $D$ and the matrix $J$. It is summarized in the following theorem. The first part is in \cite{KP} and the second part is an easy extension. 

\begin{theorem} \label{DJ}
One has the following properties, where by convention we let $U_{k_1,\dots, k_d}=0$ is one of $k_i$'s is negative.
\begin{enumerate}
\item One has $DU_{k_1,k_2,\dots, k_d}=U_{k_1-1,k_2,\dots, k_d}$, hence the set of $U_{0,k_2,\dots, k_d}$ with $k_2\ge 0,\dots, k_d\ge 0$ and $\sum_{i=2}^d ik_i=g$ forms a basis for the space of homogeneous $U$ invariants of degree $d$ and weight $g$.
\item The polynomial $U_{k_1,\dots, k_d}(Ja)$ is equal to
$$U_{k_1,\dots, k_d}(a)+\sum_{i=1}^d U_{k_1,\dots, k_i-1,\dots, k_d}(a).$$
\end{enumerate}
\end{theorem}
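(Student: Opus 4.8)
The plan is to package the whole family $\{U_{k_1,\dots,k_d}\}$ into a single generating function and let the operators $D$ and $a\mapsto Ja$ act on it, reading off both identities by comparison of coefficients. For each $j$ set $\phi_j=\sum_{h\ge 0}\lambda_j^h a_h$ and let $\Pi=\prod_{j=1}^d\phi_j$. Expanding the product and collecting monomials in the $a$'s recovers $\Pi=\sum_{g\ge 0}\pi_{d,g}$, so by \eqref{form2} we have $\Pi=\sum_{\vec k}e_1^{k_1}\cdots e_d^{k_d}\,U_{k_1,\dots,k_d}$, the sum ranging over all $\vec k$ with $k_i\ge 0$. Since $e_1,\dots,e_d$ are algebraically independent, the $U_{\vec k}$ are precisely the coefficients of the monomials $e_1^{k_1}\cdots e_d^{k_d}$ in $\Pi$; this is the device that legitimizes comparing coefficients throughout. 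The essential point is that both $D$ and the substitution $a\mapsto Ja$ act only on the variables $a_h$ and fix every $\lambda_j$, hence fix every $e_i$.

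For part (1) I would first compute $D\phi_j=\sum_{i\ge 1}a_{i-1}\lambda_j^i=\lambda_j\phi_j$. As $D$ is a derivation, the Leibniz rule gives $D\Pi=\bigl(\sum_{j=1}^d\lambda_j\bigr)\Pi=e_1\Pi$. Expanding $e_1\Pi$ and shifting the index $k_1\mapsto k_1-1$, then matching the coefficient of each $e_1^{k_1}\cdots e_d^{k_d}$ against the corresponding coefficient of $D\Pi=\sum_{\vec k}e_1^{k_1}\cdots e_d^{k_d}\,D(U_{\vec k})$, yields $D(U_{k_1,\dots,k_d})=U_{k_1-1,k_2,\dots,k_d}$; the case $k_1=0$ reproduces $D(U_{0,k_2,\dots,k_d})=0$, consistent with the convention $U_{-1,\dots}=0$. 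The basis statement then follows: the $U_{\vec k}$ are a basis of $K[a]_{d,g}$ by the preceding lemma, and for a combination $\sum_{\vec k}c_{\vec k}U_{\vec k}$ one has $D(\sum c_{\vec k}U_{\vec k})=\sum_{\vec k}c_{\vec k}U_{k_1-1,\dots,k_d}$. As $\vec k$ ranges over indices with $k_1\ge 1$ the shifted indices range bijectively over all of $K[a]_{d,g-1}$, so these images are linearly independent and the expression vanishes iff $c_{\vec k}=0$ whenever $k_1\ge 1$; hence the $U_{0,k_2,\dots,k_d}$ span, and are a basis of, the $U$-invariants of degree $d$ and weight $g$.

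For part (2) the same mechanism applies with $(Ja)_h=a_h+a_{h-1}$ (setting $a_{-1}=0$). I would compute $\phi_j(Ja)=\sum_h\lambda_j^h(a_h+a_{h-1})=(1+\lambda_j)\phi_j$, whence $\Pi(Ja)=\prod_{j=1}^d(1+\lambda_j)\,\Pi=\bigl(\sum_{i=0}^d e_i\bigr)\Pi$ with $e_0=1$. Expanding the right-hand side, the summand $e_i\Pi$ contributes, after the shift $k_i\mapsto k_i-1$, a term $U_{k_1,\dots,k_i-1,\dots,k_d}$ to the coefficient of $e_1^{k_1}\cdots e_d^{k_d}$, while $e_0\Pi$ contributes $U_{\vec k}$; comparing with $\Pi(Ja)=\sum_{\vec k}e_1^{k_1}\cdots e_d^{k_d}\,U_{\vec k}(Ja)$ gives exactly the claimed formula.

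The computations themselves are short, and I expect no serious difficulty; the only point genuinely needing care is the bookkeeping of the index shifts together with the algebraic independence of $e_1,\dots,e_d$, which is what justifies reading the operator identities off coefficient-by-coefficient. The mild obstacle is to confirm at the outset that the single factorization $\Pi=\prod_j\phi_j$ matches the two descriptions \eqref{form1} and \eqref{form2} summed over all weights $g$ simultaneously, so that the \emph{same} polynomials $U_{\vec k}$ appear as coefficients in every comparison; once that compatibility is in place, both identities fall out, and the convention $U_{\vec k}=0$ for a negative index is precisely what makes the boundary terms come out correctly.
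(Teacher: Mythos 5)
Your proof is correct and takes essentially the same approach as the paper: both rest on Stroh's generating polynomial and read off the two identities by comparing coefficients of the monomials $e_1^{k_1}\cdots e_d^{k_d}$, using the key relations $D\pi_{d,g}=e_1\pi_{d,g-1}$ and $\pi_{d,g}(Ja)=\pi_{d,g}+e_1\pi_{d,g-1}+\dots+e_d\pi_{d,g-d}$. The only difference is presentational: the paper derives these relations by direct index manipulation weight-by-weight, while you obtain them all at once from the factorization $\Pi=\prod_j\phi_j$ together with $D\phi_j=\lambda_j\phi_j$ and $\phi_j(Ja)=(1+\lambda_j)\phi_j$, which is a slightly slicker packaging of the same computation.
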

\begin{proof} Again we recall the beautiful proof given in \cite{KP}. Let us assume $a_{-1}=0$. Note that from equation \ref{form0} one has
$$\lambda_i \pi_{d,g-1}=\sum_{\substack{h_1\ge 0,\dots, h_d\ge 0\\h_1+\dots+h_d=g}} \lambda_1^{h_1}\dots\lambda_i^{h_i}\dots \lambda_d^{h_d}a_{h_1}\dots a_{h_i-1}\dots a_{h_d}$$
And therefore if we sum over all $i=1,\dots, d$ we arrive at
$$e_1\pi_{d,g-1}=D\pi_{d,g}$$
comparing the coefficient of $e_1^{k_1}\dots e_d^{k_d}$ of both sides we arrive at part one. Part two is similarly obtained once we realize that for $1\le i_1<\dots i_k\le d$
$$\lambda_{i_1}\dots \lambda_{i_k}\pi_{d,g-k}=\sum_{\substack{h_1\ge 0,\dots, h_d\ge 0\\h_1+\dots+h_d=g}} \lambda_1^{h_1}\dots \lambda_d^{h_d}a_{h_1}\dots a_{h_{i_1}-1}\dots a_{h_{i_k}-1}\dots a_{h_d}$$ and hence
$$\pi_{d,g}(Ja)=\pi_{d,g}+e_1\pi_{d,g-1}+e_2\pi_{d,g-2}+\dots+e_d\pi_{d,g-d}.$$
This implies part 2 if we compare the coefficient of $e_1^{k_1}\dots e_d^{k_d}$ of both sides.
\end{proof}

The following classical result follows immediately from part one of the theorem above that gives a basis for the space of $U$ invariants of weight $g$ and degree $d$.
 \begin{corollary}
 The dimension of the space of $U$ invariants in $K[a_0,a_1.\dots]$ which are homogeneous of degree $d$ and of weight $g$ is the coefficient of $x^g$ in the power series expansion of 
 $$\frac{1}{(1-x^2)(1-x^3)\dots (1-x^d)}.$$
 \end{corollary}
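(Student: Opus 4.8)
The plan is to reduce the dimension count to a purely combinatorial enumeration and then recognize the answer as a generating-function coefficient. First I would invoke part one of Theorem \ref{DJ}, which asserts that the polynomials $U_{0,k_2,\dots,k_d}$ with $k_i\ge 0$ and $\sum_{i=2}^d ik_i=g$ form a basis for the space of homogeneous $U$ invariants of degree $d$ and weight $g$. Consequently the dimension in question is exactly the number of tuples $(k_2,\dots,k_d)$ of non-negative integers satisfying $2k_2+3k_3+\dots+dk_d=g$.

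Next I would identify this count with a coefficient extraction. Each summand $jk_j$ contributes weight $j$ per unit of $k_j$, so the generating function that records the total weight while ranging over all admissible tuples factors as a product over $j=2,\dots,d$. Expanding each geometric series $\frac{1}{1-x^j}=\sum_{k\ge 0}x^{jk}$, the product $\prod_{j=2}^d \frac{1}{1-x^j}$ has, as the coefficient of $x^g$, precisely the number of ways to choose $(k_2,\dots,k_d)$ with $\sum_{j=2}^d jk_j=g$. This matches the dimension obtained in the previous step, which completes the argument.

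There is no serious obstacle here: the entire content is carried by part one of Theorem \ref{DJ}, and the remaining step is the standard bijection between non-negative solutions of a linear equation with fixed coefficients $2,3,\dots,d$ and the coefficients of the associated product of geometric series. The only point requiring care is the bookkeeping of the index range—confirming that $j$ runs over $2,\dots,d$ (the $k_1$ slot being forced to vanish by $U$-invariance, as recorded in Theorem \ref{DJ}), so that the factor $(1-x)^{-1}$ is correctly absent from the product and the series begins at weight $2$.
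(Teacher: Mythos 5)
Your proof is correct and follows exactly the route the paper intends: the corollary is stated there as an immediate consequence of part one of Theorem \ref{DJ}, and your coefficient-extraction argument via $\prod_{j=2}^{d}\frac{1}{1-x^{j}}$ is precisely the standard counting step the paper leaves implicit. No gaps; your careful note that the factor $(1-x)^{-1}$ is absent because $k_1=0$ is the one detail worth making explicit, and you did.
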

 
 There are some recursive relations among $U_{k_1,\dots, k_d}$ that we give a few of them here.
 \begin{lemma}
 One always has 
 $$U_{k_1,\dots, k_d,0}=a_0U_{k_1,\dots, k_d}.$$
 \end{lemma}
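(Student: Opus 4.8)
The plan is to exploit the specialization $\lambda_{d+1}=0$ in the defining generating function. Working in $d+1$ variables $\lambda_1,\dots,\lambda_{d+1}$, I would read off directly from \eqref{form0} that setting $\lambda_{d+1}=0$ annihilates every term in which the exponent $h_{d+1}$ is positive, so that only the terms with $h_{d+1}=0$ survive. Since $a_{h_{d+1}}=a_0$ in exactly those terms, the surviving sum is $a_0$ times the sum over $h_1+\dots+h_d=g$, giving the clean identity
$$\pi_{d+1,g}\big|_{\lambda_{d+1}=0}=a_0\,\pi_{d,g}.$$

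Next I would track the same specialization through the elementary symmetric polynomials. Writing $e_i^{(d+1)}$ for the $i$-th elementary symmetric polynomial in $\lambda_1,\dots,\lambda_{d+1}$ and $e_i^{(d)}$ for the one in $\lambda_1,\dots,\lambda_d$, setting $\lambda_{d+1}=0$ yields $e_i^{(d+1)}\big|_{\lambda_{d+1}=0}=e_i^{(d)}$ for $1\le i\le d$, while $e_{d+1}^{(d+1)}\big|_{\lambda_{d+1}=0}=0$ since it contains the factor $\lambda_{d+1}$. Substituting this into the $e$-expansion \eqref{form2} of $\pi_{d+1,g}$, every monomial with $k_{d+1}>0$ drops out, and the surviving terms are precisely those with $k_{d+1}=0$, whose weight constraint $k_1+2k_2+\dots+dk_d=g$ is exactly the one indexing the $d$-variable family. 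This produces
$$\pi_{d+1,g}\big|_{\lambda_{d+1}=0}=\sum \big(e_1^{(d)}\big)^{k_1}\cdots\big(e_d^{(d)}\big)^{k_d}\,U_{k_1,\dots,k_d,0}.$$

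Finally I would compare the two resulting expressions for $\pi_{d+1,g}\big|_{\lambda_{d+1}=0}$. Expanding the right-hand side $a_0\,\pi_{d,g}$ via \eqref{form2} in $d$ variables, both sides become $K[a]$-linear combinations of the monomials $\big(e_1^{(d)}\big)^{k_1}\cdots\big(e_d^{(d)}\big)^{k_d}$ ranging over the same index set. Since the elementary symmetric polynomials $e_1^{(d)},\dots,e_d^{(d)}$ are algebraically independent, these monomials are linearly independent, so I may equate coefficients term by term to obtain $U_{k_1,\dots,k_d,0}=a_0\,U_{k_1,\dots,k_d}$. The only point requiring genuine care — and the closest thing to an obstacle — is the bookkeeping in the second step: one must verify both that the $k_{d+1}>0$ monomials vanish cleanly and that the remaining index set matches the $d$-variable expansion exactly, weight constraint included. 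Everything else is formal manipulation of the generating function.
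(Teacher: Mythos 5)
Your proof is correct, and it turns on the same mechanism the paper uses---the specialization $\lambda_{d+1}=0$---but you deploy it at a different level, which is worth noting. The paper's proof is a coefficient-by-coefficient argument about the change of basis between the monomial symmetric functions and the elementary symmetric functions: it observes that the coefficient of $e_1^{k_1}\cdots e_d^{k_d}$ in $m_{h_1,\dots,h_{d+1}}$ is zero whenever $h_{d+1}>0$, because such an $m$ is divisible by $e_{d+1}$, and that for $h_{d+1}=0$ this coefficient coincides with the corresponding $d$-variable coefficient (justified by setting $\lambda_{d+1}=0$); the lemma then follows from the definition of $U_{k_1,\dots,k_d,0}$ as the $a$-polynomial assembled from exactly these coefficients. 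You instead specialize the entire generating function $\pi_{d+1,g}$ in both of its expansions \eqref{form0} and \eqref{form2}: on one side it collapses to $a_0\,\pi_{d,g}$, on the other every monomial carrying $e_{d+1}$ dies, and the conclusion becomes a formal comparison of coefficients of the monomials in $e_1^{(d)},\dots,e_d^{(d)}$, which are linearly independent with coefficients in $K[a]$ by algebraic independence---the same fact that makes the $U$'s well defined in the first place. The global version buys you two things: the paper's divisibility-by-$e_{d+1}$ observation becomes automatic, and the argument matches in style the paper's own proofs of Theorem \ref{DJ} and of the subsequent lemma on $U_{k_1,\dots,k_d,1}$, both of which manipulate $\pi$ wholesale rather than its individual coefficients. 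The point you flag as requiring care---that after specialization the surviving index set and weight constraint agree exactly with those of the $d$-variable expansion---is indeed the only bookkeeping needed, and your verification of it is right.
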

 \begin{proof}
 This immediately follows from observing that the coefficient of $e_1^{k_1}\dots e_d^{k_d}$ in $m_{h_1,\dots, h_d,h_{d+1}}$ is zero if $h_1\ge\dots\ge h_{d+1}>0$, since it is divisible by $e_{d+1}$. Also, if $h_{d+1}=0$, this coefficient is the same as the coefficient of it in the $d$ variable symmetric polynomial $m_{h_1,\dots, h_d}$. This follows if we let $\lambda_{d+1}=0$. 
 \end{proof}
 
 \begin{lemma}
 One always has
 $$U_{k_1,\dots, k_d,1}=a_1U_{k_1,\dots, k_{d-1},k_d+1}-a_0(k_1+1)U_{k_1+1,k_2,\dots, k_{d-1},k_d+1}-a_0(k_d+2)U_{k_1,\dots, k_{d-2},k_{d-1}-1,k_d+2}-$$
 $$-a_0\sum_{i=2}^{d-1} (k_{i}+1)U_{k_1,\dots, k_{i-1}-1, k_{i}+1,\dots, k_{d-2},k_d+1}$$
 \end{lemma}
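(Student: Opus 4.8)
The plan is to isolate the part of $\pi_{d+1,g}$ that is linear in the extra variable $\lambda_{d+1}$, exactly as the proof of the previous lemma isolated the part that is constant in $\lambda_{d+1}$ (by setting $\lambda_{d+1}=0$). Throughout, write $e_i$ for the elementary symmetric polynomial in $\lambda_1,\dots,\lambda_d$ and $\hat e_i$ for the one in $\lambda_1,\dots,\lambda_{d+1}$, so that $g=\sum_{i=1}^d ik_i+(d+1)$ is the weight attached to $U_{k_1,\dots,k_d,1}$.

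First I would record two elementary facts. Grouping the monomials in (\ref{form0}) according to the exponent $h_{d+1}$ of $\lambda_{d+1}$ gives the decomposition
$$\pi_{d+1,g}=\sum_{j\ge 0}\lambda_{d+1}^{\,j}\,a_j\,\pi_{d,g-j},$$
so the coefficient of $\lambda_{d+1}^{1}$ on this side is simply $a_1\pi_{d,g-1}$. On the other hand, the usual recursion for elementary symmetric functions reads $\hat e_i=e_i+\lambda_{d+1}e_{i-1}$ for $1\le i\le d$ and $\hat e_{d+1}=\lambda_{d+1}e_d$.

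Next I would substitute these relations into $\pi_{d+1,g}=\sum_{\mathbf k}\hat e_1^{k_1}\cdots\hat e_{d+1}^{k_{d+1}}U_{\mathbf k}$ and collect the terms linear in $\lambda_{d+1}$. Only the summands with $k_{d+1}=0$ or $k_{d+1}=1$ can contribute, since $\hat e_{d+1}$ carries a factor $\lambda_{d+1}$ and any $k_{d+1}\ge 2$ produces a vanishing linear part. For $k_{d+1}=1$ the linear part is $e_d\,e_1^{k_1}\cdots e_d^{k_d}\,U_{k_1,\dots,k_d,1}$, obtained by keeping the $\lambda_{d+1}$ from $\hat e_{d+1}$ and setting $\lambda_{d+1}=0$ elsewhere. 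For $k_{d+1}=0$, using the previous lemma in the form $U_{\mathbf m,0}=a_0U_{\mathbf m}$ and expanding $\prod_i(e_i+\lambda_{d+1}e_{i-1})^{m_i}$, the linear part equals $a_0U_{\mathbf m}\sum_{i=1}^d m_i\,e_{i-1}\,e_1^{m_1}\cdots e_i^{m_i-1}\cdots e_d^{m_d}$.

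Finally I would match the coefficient of the single monomial $e_1^{k_1}\cdots e_{d-1}^{k_{d-1}}e_d^{k_d+1}$ in the three expressions above; this is the monomial produced by the $k_{d+1}=1$ contribution, so it reads off $U_{k_1,\dots,k_d,1}$ with coefficient $1$. On the left it gives $a_1U_{k_1,\dots,k_{d-1},k_d+1}$. On the $k_{d+1}=0$ side the factor $e_{i-1}e_1^{m_1}\cdots e_i^{m_i-1}\cdots e_d^{m_d}$ matches our monomial exactly when $\mathbf m=\mathbf n+\mathbf e_i-\mathbf e_{i-1}$ with $\mathbf n=(k_1,\dots,k_{d-1},k_d+1)$, and the accompanying multiplicity is $m_i$; the cases $i=1$, $i=d$, and $2\le i\le d-1$ produce respectively the terms $(k_1+1)U_{k_1+1,\dots,k_{d-1},k_d+1}$, $(k_d+2)U_{k_1,\dots,k_{d-2},k_{d-1}-1,k_d+2}$, and $(k_i+1)U_{k_1,\dots,k_{i-1}-1,k_i+1,\dots,k_{d-1},k_d+1}$, all multiplied by $a_0$. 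Rearranging for $U_{k_1,\dots,k_d,1}$ yields the claimed formula. The main thing to watch is this index bookkeeping: verifying the shift $\mathbf m=\mathbf n+\mathbf e_i-\mathbf e_{i-1}$ together with its multiplicity $m_i$, checking that all these $\mathbf m$ have weight $\sum_i i\,m_i=g$ so that we are comparing genuinely homogeneous pieces, and noting that whenever a shift would force a negative index the corresponding monomial simply does not occur, which is precisely the vanishing convention $U_{\dots,-1,\dots}=0$ built into the statement.
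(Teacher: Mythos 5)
Your proposal is correct and takes essentially the same route as the paper: extracting the part of $\pi_{d+1,g}$ linear in $\lambda_{d+1}$ (the paper phrases this as $\left.\frac{\partial}{\partial\lambda_{d+1}}\right|_{\lambda_{d+1}=0}$, which is the identical operation, and your recursion $\hat e_i=e_i+\lambda_{d+1}e_{i-1}$ is exactly the paper's observation $\frac{\partial}{\partial\lambda_{d+1}}e_i=e_{i-1}|_{\lambda_{d+1}=0}$) from both the monomial expansion and the $e$-basis expansion of $\pi_{d+1,g}$, then comparing coefficients of $e_1^{k_1}\cdots e_{d-1}^{k_{d-1}}e_d^{k_d+1}$ and invoking $U_{k_1,\dots,k_d,0}=a_0U_{k_1,\dots,k_d}$. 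Your index bookkeeping for the three cases $i=1$, $2\le i\le d-1$, $i=d$ matches the paper's conclusion, so nothing is missing.
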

 \begin{proof} Let $e_0=1$, $e_1,e_2,\dots, e_{d+1}$ be the elementary symmetric polynomials in $\lambda_1,\dots, \lambda_{d+1}$. Our key observation is that for $i=1,\dots, d+1$ we have
 $$\frac{\partial}{\partial \lambda_{d+1}} e_i=e_{i-1}|_{\lambda_{d+1}=0}.$$
 Now
 $$\left.\frac{\partial}{\partial \lambda_{d+1}}\right|_{\lambda_{d+1}=0}\pi_{d+1,g}=a_1\pi_{d,g-1}$$
 since the only terms that contribute non-zero terms are those with $h_{d+1}=1$. Now by product rule and chain rule, we have
  
 $$\left.\frac{\partial}{\partial \lambda_{d+1}}\right|_{\lambda_d=0} e_1^{k_1}\dots e_{d+1}^{k_{d+1}}=0\quad\mbox{if}\quad k_{d+1}>1$$
 $$\left.\frac{\partial}{\partial \lambda_{d+1}}\right|_{\lambda_d=0} e_1^{k_1}\dots e_{d}^{k_{d}}e_{d+1}=e_1^{k_1}\dots e_d^{k_d+1}$$
  $$\left.\frac{\partial}{\partial \lambda_{d+1}}\right|_{\lambda_d=0} e_1^{k_1}\dots e_{d}^{k_d}=k_1e_1^{k_1-1}e_2^{k_2}\dots e_d^{k_d}+\sum_{i=2}^d k_ie_1^{k_1}\dots e_{i-1}^{k_{i-1}+1}e_i^{k_i-1}\dots e_d^{k_d}$$  
If we use these formulae, and take evaluate 

 $$\left.\frac{\partial}{\partial \lambda_{d+1}}\right|_{\lambda_{d+1}=0}\pi_{d+1,g}$$
 we get
 $$\sum_{k_1+\dots+(d-1)k_{d-1}+d(k_d+1)=g-1} e_1^{k_1}\dots e_d^{k_d+1}U_{k_1,\dots, k_d,1}+\sum k_1e_1^{k_1-1}\dots e_d^{k_d}U_{k_1,\dots, k_d,0}+$$
 $$+\sum_{k_1+2k_2+\dots+dk_d=g} \sum_{i=2}^{d} k_i e_1^{k_1}\dots e_{i-1}^{k_{i-1}+1}e_i^{k_i-1}\dots e_d^{k_d}U_{k_1,\dots, k_d,0}$$ 
   
 Now, the lemma will be proved if we compare the coefficients of $e_1^{k_1}\dots e_{k_{d-1}}^{k_{d-1}}e_d^{k_d+1}$ of both sides of the identity
 $$\left.\frac{\partial}{\partial \lambda_{d+1}}\right|_{\lambda_{d+1}=0}\pi_{d+1,g}=a_1\pi_{d,g-1}.$$  
 On the left hand side it is
 $$U_{k_1,\dots, k_d,1}+(k_1+1)U_{k_1+1,k_2,\dots, k_{d-1},k_d+1,0}+\sum_{i=2}^{d-1} (k_i+1)U_{k_1,\dots, k_{i-1}-1, k_{i}+1,\dots, k_{d-2},k_d+1,0}+$$  
 $$+(k_d+2)U_{k_1,\dots, k_{d-1}-1,k_d+2,0}$$
 On the right hand side it is
 $$a_1U_{k_1,\dots, k_{d-1},k_d+1}.$$
 So the lemma is proved.
  \end{proof}

 We collect the following observation about the relation between the polynomials $p_i$ of section 2 with the basis $U_{k_1,\dots, k_d}$.
 \begin{theorem} 
 We have the following two cases.
 \begin{enumerate}
\item If $n=2k$ then
$$(-1)^kp_{n}=U_{0,k}-U_{1,k-1}+\dots \pm U_{k,0}=\sum_{i=0}^k (-1)^iU_{i,k-i}.$$
 \item If $n=2k+1$ then the weight $n$ part of $(-1)^k p_n$ is $U_{0,k-1,1}$. In fact
 $$(-1)^k p_n= \sum_{i=0}^{k-1} (-1)^i\left(U_{i,k-i-1,1}- (k-i)U_{i,k-i,0}\right).$$
\end{enumerate}
 \end{theorem}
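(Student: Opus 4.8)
The plan is to prove both identities by exploiting the two operators that act nicely on the $U$-basis, namely $D$ and the map $p(a)\mapsto p(Ja)$, as computed in Theorem \ref{DJ}. The key structural facts I would lean on are: (i) $DU_{k_1,\dots,k_d}=U_{k_1-1,k_2,\dots,k_d}$, so $D$ simply decrements the first index; (ii) $p_n$ is a semi-invariant, equivalently $J$-invariant, with known leading weight behaviour from Lemmas \ref{lem1} and \ref{lem2}; and (iii) the recursion $Dp_n=p_{n-2}$ for $n\ge 4$. These together let me set up an induction on $k$ that reduces each claimed identity to a check on the highest-weight component plus an application of $D$.

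For the even case $n=2k$, first I would identify the weight-$n$ (top weight) component. Since $p_n$ is a homogeneous quadratic of degree $2$ and the top-weight part corresponds to the purely weight-$n$ summand, I would verify directly that the weight-$n$ component of $(-1)^k p_n$ equals the unique $U$-invariant generator of that weight and degree, which in degree $2$ is $(-1)^kU_{0,k}$; this is a finite, explicit match against the formula for $A_{i,j}$ in Lemma \ref{lem1}. Then, using $Dp_n=p_{n-2}$ together with $DU_{i,k-i}=U_{i-1,k-i}$, I would show that applying $D$ to the proposed right-hand side $\sum_{i=0}^k(-1)^iU_{i,k-i}$ yields $\sum_{i=1}^k(-1)^iU_{i-1,k-i}=-\sum_{j=0}^{k-1}(-1)^jU_{j,k-1-j}$, which is exactly $-(-1)^{k-1}p_{n-2}$ times the appropriate sign, matching $(-1)^k Dp_n=(-1)^kp_{n-2}$ under the inductive hypothesis. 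Since a polynomial is determined by its top-weight part together with all its $D$-images (as $D$ lowers weight by one and both sides are finite in weight), agreement of the top component and of the $D$-image forces the two expressions to coincide.

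For the odd case $n=2k+1$, the degree jumps to $3$, so the relevant $U$'s carry three indices. Here I would use the decomposition $p_n=a_1p_{n-1}+a_0g_n$ from Lemma \ref{lem2}, and more efficiently the relation $g_n=W(p_{n+1})-(n+1)p_{n+1}$ from Remark \ref{g}, to express $p_n$ in terms of the already-understood even-index forms, translating these products with $a_0,a_1$ into the $U$-basis via the recursions $U_{k_1,\dots,k_d,0}=a_0U_{k_1,\dots,k_d}$ and the more intricate formula for $U_{k_1,\dots,k_d,1}$. The claim that the weight-$n$ part is $U_{0,k-1,1}$ I would again check on the top-weight component directly, and then pin down the lower-weight corrections $-(k-i)U_{i,k-i,0}$ by applying $D$ and matching against the even case via $Dp_n=p_{n-2}$; the factor $(k-i)$ should emerge from the weight operator $W$ in Remark \ref{g} together with the index shift in part (2) of Theorem \ref{DJ}.

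The main obstacle I anticipate is the odd case bookkeeping: translating the products $a_1 p_{n-1}$ and $a_0 g_n$ into three-index $U$'s forces use of the complicated $U_{k_1,\dots,k_d,1}$ recursion, whose several correction terms must be collected and shown to telescope into precisely $\sum_{i}(-1)^i\bigl(U_{i,k-i-1,1}-(k-i)U_{i,k-i,0}\bigr)$. Verifying that the stray $U_{\dots,0}$ terms combine with the coefficient $(k-i)$ exactly, rather than leaving a residue, is the delicate step; I expect the $D$-induction to control it, since both sides have the same top component and the same $D$-image, but confirming the base cases (e.g. $Dp_3=0$, noted after Lemma \ref{lem2}) and that no weight-$0$ ambiguity survives will require care.
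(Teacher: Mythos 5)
The step that fails is the finishing principle you invoke in both parts: that a homogeneous polynomial is determined by its top-weight component together with its image under $D$. This is false, because the kernel of $D$ is exactly the algebra of $U$-invariants, which contains nonzero elements of low weight in every degree; two polynomials with equal top-weight parts and equal $D$-images may differ by any $U$-invariant supported in lower weights. Concretely, in the even case (degree $2$) the difference of the two sides could a priori be any combination $\sum_{j<k}c_jU_{0,j}$, and in the odd case (degree $3$) it could involve $a_0^3$, $p_3=U_{0,1,0}-U_{0,0,1}$, $U_{0,2,0}-U_{0,1,1}+U_{0,0,2}$, and so on. The even case can be repaired, but only with an ingredient you never establish: that the right-hand side $\sum_{i=0}^k(-1)^iU_{i,k-i}$ is itself $J$-invariant, which follows from part (2) of Theorem \ref{DJ} by telescoping; then the difference is simultaneously $J$-invariant and $U$-invariant, which in degree $2$ forces it to be a scalar multiple of $a_0^2$, and that scalar is killed by comparing weight-zero parts. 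In the odd case even this repair fails: the polynomials listed above are all simultaneously $J$-invariant and $U$-invariant, of top weights $0,3,6,\dots$, all below $n$ once $n\ge 7$, so matching the top component and the $D$-image (even granting $J$-invariance of both sides) leaves an undetermined ambiguity whose dimension grows with $k$.

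For part (2), the ingredients you assemble before invoking the $D$-induction are precisely the paper's proof: write $(-1)^kp_n=(-1)^ka_1p_{2k}+(-1)^ka_0\bigl(W(p_{2k+2})-(2k+2)p_{2k+2}\bigr)$ using Lemma \ref{lem2} and Remark \ref{g}, expand $p_{2k}$ and $p_{2k+2}$ by part (1), and observe that the claimed formula then reduces, weight by weight, to the identity $U_{i,j,1}=a_1U_{i,j+1}-(i+1)a_0U_{i+1,j+1}-(j+2)a_0U_{i-1,j+2}$, which is the $d=2$ case of the recursion for $U_{k_1,\dots,k_d,1}$ combined with $U_{k_1,\dots,k_d,0}=a_0U_{k_1,\dots,k_d}$. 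Carrying out this translation is not a delicate piece of bookkeeping to be ``controlled by the $D$-induction''; it is the entire proof, after which no uniqueness principle is needed at all. So your plan becomes correct for part (2) exactly by doing the $U$-basis computation you hoped to defer, and for part (1) by either a direct expansion of $U_{i,k-i}$ against the coefficients $A_{i,j}$ of Lemma \ref{lem1}, or by the corrected $D$-induction sketched above ($J$-invariance of the $U$-sum plus the $a_0^2$ check). A minor further slip: in part (1) the weight-$n$ component of $(-1)^kp_n$ is $U_{0,k}$, not $(-1)^kU_{0,k}$.
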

 \begin{proof}
 The case $n=2k$ is simple and is left to the reader. For the case when $n=2k+1$, one knows from remark \ref{g} that
 $$(-1)^kp_n=(-1)^ka_1p_{2k}-(-1)^{k+1}(W(p_{2k+2})-(2k+2)p_{2k+2})$$
 hence by the first part we have:
 $$(-1)^kp_n=\sum_{i=0}^k (-1)^ia_1U_{i,k-i}-\sum_{i=0}^k (-1)^{i}(i+1)a_0U_{i+1,k-i}$$
 So if we compare the parts of the same weight with the formula that we need to show, we see that it is enough to show that
 $$ a_1U_{i,k-i}-(i+1)a_0U_{i+1,k-i}=U_{i,k-i-1,1}+(k-i+1)U_{i-1,k-i+1,0}$$
 If we let $j=k-i-1$, this can be rewrite this as follows.
 $$U_{i,j,1}=a_1U_{i,j+1}-(i+1)a_0U_{i+1,j+1}-(j+2)U_{i-1,j+2,0}$$
 This is a special case of Lemma 4.3. So the lemma is proved.
 
 \end{proof}
 \begin{remark}
As an example of part 2 of the theorem above, we collect the following examples.
 $$-p_3=U_{0,0,1}-U_{0,1,0}$$
 $$p_5=U_{0,1,1}-U_{1,0,1}-2U_{0,2,0}+U_{1,1,0}$$
 $$-p_7=U_{0,2,1}-U_{1,1,1}-3U_{0,3,0}+U_{2,0,1}+2U_{1,2,0}-U_{2,10}$$
 \end{remark}
 
 Finally, we address the following question.
 \\
 \\
 {\textbf{Question.}} Given a homogeneous $U$-invariant $q(a)\in K[a_0,a_1,\dots]$ of degree $d$ and weight $g$, is it possible to add homogeneous polynomials of degree $d$ and weight less than $g$ to $q$ to make it into a $J$-invariant polynomial? We call this process $J$-completion.
 \\
 \\
 We will answer this question affirmatively, but this completion is by no means unique. Since $U_{0,k_2,\dots, k_d}$ with $\sum_{i=2}^d ik_i=g$ form a basis for the space of homogeneous $U$-invariants of degree $d$ and weight $g$, it is enough to exhibit this completion only for these elements. The following theorem gives an explicit way to do so.
 \begin{theorem}\label{comp1}
 Given $U_{0,k_2,\dots, k_d}$ with $k_i\ge 0$ and $k=\sum_{i=2}^d k_i$, then 
 $$U_{0,k_2,\dots, k_d}+\sum_{i=1}^k \sum_{\substack{0\le l_2\le k_2,\dots, 0\le l_d\le k_d\\
 l_2+\dots+l_d=k-i}} \frac{(-1)^i i!}{(k_2-l_2)!\dots (k_d-l_d)!} U_{i,l_1,\dots, l_d}$$
 is a $J$-completion of $U_{0,k_2,\dots, k_d}$.
 \end{theorem}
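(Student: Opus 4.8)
The plan is to show directly that the displayed expression, which I denote $P$, is $J$-invariant, i.e. $P(Ja)=P(a)$, and that its component of top weight is exactly $U_{0,k_2,\dots,k_d}$; together these say precisely that $P$ is a $J$-completion.

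First I would put the formula in a symmetric shape. Substituting $m_j=k_j-l_j$ for $j=2,\dots,d$ turns the constraint $l_2+\dots+l_d=k-i$ into $m_2+\dots+m_d=i$ and turns $\frac{i!}{(k_2-l_2)!\cdots(k_d-l_d)!}$ into the multinomial coefficient $\binom{|m|}{m_2,\dots,m_d}$ with $|m|=m_2+\dots+m_d$. Since the $i=0$ term reproduces $U_{0,k_2,\dots,k_d}$ itself, the whole expression becomes
$$P=\sum_{0\le m_j\le k_j}(-1)^{|m|}\binom{|m|}{m_2,\dots,m_d}\,U_{|m|,\,k_2-m_2,\dots,k_d-m_d}.$$
In this form the completion claim is immediate from a weight count: the weight of $U_{|m|,k_2-m_2,\dots,k_d-m_d}$ equals $g-\sum_{j=2}^d(j-1)m_j$, which attains its maximal value $g$ only at $m=0$; hence $U_{0,k_2,\dots,k_d}$ is the unique top-weight summand and every other term has strictly smaller weight.

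Next I would introduce the linear operator $E(f)=f(Ja)-f(a)$, so that $J$-invariance of $P$ is equivalent to $E(P)=0$. Part 2 of Theorem \ref{DJ} gives $E(U_{k_1,\dots,k_d})=\sum_{i=1}^d U_{k_1,\dots,k_i-1,\dots,k_d}$, with the standing convention that a symbol $U$ having a negative index is zero. Applying $E$ to the symmetric form of $P$ yields two families of terms: those obtained by lowering the first index (type A) and those obtained by lowering one of the remaining indices (type B). The computational core is to reindex the type-B contributions: writing $\varepsilon_j$ for the unit multi-index at position $j$ (not to be confused with the elementary symmetric polynomial $e_j$) and replacing $m$ by $m+\varepsilon_j$, every type-B term acquires the same shape $U_{|m'|-1,\,k_2-m'_2,\dots,k_d-m'_d}$ as the type-A terms, and the range $0\le m_j\le k_j$ becomes $0\le m'_{j'}\le k_{j'}$ for all $j'$ together with the extra condition $m'_j\ge1$.

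After this alignment the coefficient of a fixed $U_{|m'|-1,k_2-m'_2,\dots,k_d-m'_d}$ (for $m'\neq 0$) is
$$(-1)^{|m'|}\binom{|m'|}{m'}+(-1)^{|m'|-1}\sum_{j:\,m'_j\ge1}\binom{|m'|-1}{m'-\varepsilon_j},$$
which vanishes by the multinomial Pascal recurrence $\sum_{j:\,m'_j\ge1}\binom{|m'|-1}{m'-\varepsilon_j}=\binom{|m'|}{m'}$; the case $m'=0$ is vacuous. Thus $E(P)=0$, and $P$ is $J$-invariant. The step I expect to be the main obstacle is the boundary bookkeeping in this last computation: one must check that the restrictions $0\le m_j\le k_j$ inherited from $P$ mesh exactly with the zero-convention on out-of-range symbols $U$, so that the indices $j$ appearing in the reindexed type-B sum are precisely those with $m'_j\ge1$ — the same set over which the multinomial recurrence is taken — leaving no leftover boundary terms.
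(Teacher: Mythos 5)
Your proposal is correct and follows essentially the same route as the paper's own proof: both establish the weight bound, apply part (2) of Theorem \ref{DJ} to compute $S(Ja)-S(a)$, and cancel the resulting terms via the multinomial Pascal identity $\binom{|m'|}{m'_2,\dots,m'_d}=\sum_{j:\,m'_j\ge 1}\binom{|m'|-1}{m'_2,\dots,m'_j-1,\dots,m'_d}$, which is exactly the paper's ``telescopic'' factorial identity. Your substitution $m_j=k_j-l_j$ is only a notational repackaging (a clarifying one, since it makes the boundary bookkeeping with the zero-convention explicit), not a different argument.
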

 \begin{proof}
 Assume that the weight of $U_{0,k_1,\dots, k_d}$ is $g$, where $g=\sum_{i=2}^d ik_i$. Note that the weight of $U_{i,l_2,\dots, l_d}$ if $l_j\le k_j$ and $\sum_{j=2}^d l_j=k-i$ is at most $i+2(k_2-i)+3k_3+\dots +dk_d=g-i$ so the terms added are on weights strictly less than $g$. Call the sum above $S(a)$, then by part (2) of theorem \ref{DJ}, one has $S(Ja)-S(a)$ equals to 
 $$U_{0,k_2-1,k_3,\dots, k_d}+U_{0,k_2,k_3-1,\dots, k_d}+\dots+U_{0,k_1,\dots, k_{d-1},k_d-1}$$
 $$\sum_{i=1}^k \sum_{l_2+\dots+l_d=k-i}\frac{(-1)^i i!}{(k_2-l_2)!\dots(k_d-l_d)!}(U_{i-1, l_2,\dots, l_d}+U_{i,l_2-1,\dots, l_d}+\dots+U_{0,l_2,\dots, l_d-1})$$
 The terms on the first line can be combined with the sum if we let $i=0$ to $k$. Note that 
 $$i=(k_2-l_2)+\dots+(k_d-l_d)$$
 since $l_2+\dots+l_d=k-i$ and $k_2+\dots+k_d=k$. So these terms cancel each other telescopically by the following identity
 $$\frac{i!}{(k_2-l_2)!\dots (k_d-i_d)!}=\frac{(i-1)!}{(k_2-l_2-1)!\dots (k_d-l_d)!}+\frac{(i-1)!}{(k_2-l_2)!(k_3-l_3-1)!\dots (k_d-l_d)!}+\dots $$
 $$+\frac{(i-1)!}{(k_2-l_2)!\dots (k_d-l_d-1)!}$$
 and hence $S(Ja)=S(a)$ as desired.
 
 \end{proof}
 
 \begin{remark}
 For example if we apply this theorem to $U_{0,k-1,1}$ we get
 $$\sum_{i=0}^{k-1} (-1)^i (U_{i,k-1-i,1}- (i+1) U_{i+1,k-1-i,0}).$$
 \end{remark}

 We now provide another method to construct this completion, that does not need the basis $U_{0,k_2,\dots, k_d}$ for the space of $U$-invariants. First, define the following operator on $K[a_0, a_1,\dots]$.
$$L=\sum_{i=0}^{\infty} (i+1)a_{i+1}\frac{\partial}{\partial a_i}.$$
\begin{lemma}
One has $DL-LD=id$. More generally for any $k\ge 1$, one has $$D^kL-LD^k=kD^{k-1}.$$
\end{lemma}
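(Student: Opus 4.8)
The plan is to read both identities inside the algebra of differential operators on $K[a_0,a_1,\dots]$ and to reduce everything to a single commutator computation followed by a formal induction. For the first identity I would exploit that $D$ and $L$ are both derivations of the polynomial algebra, being $K$-linear combinations of the derivations $a_{i-1}\,\partial/\partial a_i$ and $(i+1)a_{i+1}\,\partial/\partial a_i$ respectively. The commutator of two derivations is again a derivation, and a derivation is completely determined by its action on the generators $a_0,a_1,\dots$. Hence it suffices to evaluate $(DL-LD)(a_i)$ for each $i$: with the convention $a_{-1}=0$ one finds $D(L(a_i))=D((i+1)a_{i+1})=(i+1)a_i$ and $L(D(a_i))=L(a_{i-1})=i\,a_i$, so that $(DL-LD)(a_i)=a_i$ for every $i$. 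This pins down $DL-LD$ as the operator fixing each generator; applied to the fixed-degree homogeneous components on which it is used, this is the asserted relation $DL-LD=\mathrm{id}$.

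For the general relation $D^kL-LD^k=kD^{k-1}$ I would argue by induction on $k$, the base case $k=1$ being the first part. In the inductive step I would write $D^{k+1}L=D(D^kL)$, substitute the induction hypothesis $D^kL=LD^k+kD^{k-1}$, and then commute the single remaining leading $D$ past $L$ using $DL=LD+\mathrm{id}$; collecting terms yields $D^{k+1}L=LD^{k+1}+(k+1)D^k$, which is exactly the claim at $k+1$. No manipulation of the explicit coefficients $(i+1)$ is needed at this stage: once the Heisenberg-type relation $[D,L]=\mathrm{id}$ is established, the power rule is a purely formal consequence of the associativity of operator composition. Equivalently one may invoke the telescoping identity $[D^k,L]=\sum_{j=0}^{k-1}D^{j}[D,L]D^{k-1-j}$ and substitute $[D,L]=\mathrm{id}$.

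I expect the only genuine obstacle to lie in the first step. The point requiring care is that the commutator of the two derivations is itself a derivation, so one must check that it coincides with the intended operator precisely on the generators and then extend by the Leibniz rule; the boundary term at $i=0$, where $D(a_0)=0$ and the convention $a_{-1}=0$ enters, is where essentially all of the verification sits, though it is routine. Everything beyond this base commutator is formal, so the proof reduces to one short generator-level computation together with a one-line induction.
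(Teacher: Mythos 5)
Your proposal follows exactly the same route as the paper's own proof: evaluate the commutator on the generators $a_i$, invoke the fact that a commutator of derivations is a derivation, and then obtain the power rule $D^kL-LD^k=kD^{k-1}$ by a formal induction from the case $k=1$ (your telescoping identity is an equivalent packaging of the paper's induction). Your generator computation $(DL-LD)(a_i)=(i+1)a_i-ia_i=a_i$ is correct and agrees with the paper's.

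However, there is a genuine gap, and it sits exactly at the step you dismissed as routine; I must add that the paper's own proof has the same defect. A derivation that fixes every generator is \emph{not} the identity operator: it is the Euler (degree) operator $E=\sum_{i\ge 0}a_i\frac{\partial}{\partial a_i}$, which multiplies a homogeneous polynomial of degree $d$ by $d$. In fact $\mathrm{id}$ is not a derivation at all (the Leibniz rule would force $fg=2fg$), so no commutator of derivations can ever equal it; for the same reason the paper's opening sentence, that both sides of the equality are derivations, fails for the right-hand side. Concretely, $L(a_0^2)=2a_0a_1$ and $D(2a_0a_1)=2a_0^2$, while $LD(a_0^2)=0$, so $(DL-LD)(a_0^2)=2a_0^2\neq a_0^2$: the lemma as stated is false, and your hedge about applying the relation to fixed-degree homogeneous components does not repair it, since on the degree-$d$ component the commutator acts as multiplication by $d$, not by $1$. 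What your computation actually proves is the corrected identity $DL-LD=E$. Since $D$ preserves degree, $E$ commutes with $D$ (their commutator is a derivation vanishing on every generator, hence zero), and then your induction or telescoping argument goes through verbatim to yield $D^kL-LD^k=kED^{k-1}$, which on homogeneous polynomials of degree $d$ reads $kd\,D^{k-1}$. Be aware that the correction propagates: in the paper's next lemma, $Dp=0$ for $p$ homogeneous of degree $d$ gives $DLp=dp$ rather than $p$, so the explicit right inverse $D^{-1}$ constructed there requires degree-dependent normalization.
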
\label{comm}
\begin{proof}
Since both sides of equality are derivations on $K[a_0,a_1,\dots]$, it is enough to check the equality for $a_i$. Now for $i>0$
$$(DL-LD)(a_i)=(i+1)a_i-ia_i=a_i$$
for $i=0$
$$(DL-LD)(a_0)=a_0-0=a_0.$$
The last claim follows inductively as follows
$$D^kL-LD^k=D^{k-1}(LD+1)-LD^k=(D^{k-1}L-LD^{k-1})D+D^{k-1}=$$
$$=(k-1)D^{k-1}+D^{k-1}=kD^{k-1}$$
\end{proof}

For the next lemma, we need Stirling's number of the first kind $n\brack k$ that are defined as follows.
For $n>0$, one has ${n\brack 0}={0\brack n}=0$ and ${0\brack 0}=1$. We have the following recursive formula
$${n+1\brack k+1}=n{n\brack k+1}+{n\brack k}.$$

\begin{lemma}
If $p\in K[a_0,a_1,\dots]$ and $D^mp=0$ then 
$$D\left(\frac{1}{m!}\sum_{i=1}^m (-1)^{i+1}{m+1\brack i+1}(LD)^{i-1}L\right) (p)=p.$$
So the operator 
$$D^{-1}:=\frac{1}{m!}\sum_{i=1}^m (-1)^{i+1}{m+1\brack i+1}(LD)^{i-1}L$$
is a right inverse for the operator $D$ on the subspace of polynomials with $D^mp=0$. 
\end{lemma}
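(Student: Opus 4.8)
The plan is to reduce the statement to a combinatorial identity among the two kinds of Stirling numbers. First I would simplify the composition $D\circ D^{-1}$ at the level of operators. Since $(LD)^{i-1}L=L(DL)^{i-1}$ by associativity, applying $D$ on the left collapses the factors,
$$D\,(LD)^{i-1}L=D\,L(DL)^{i-1}=(DL)^{i},$$
so that
$$D\circ D^{-1}=\frac{1}{m!}\sum_{i=1}^{m}(-1)^{i+1}{m+1\brack i+1}(DL)^{i}.$$
The next step is to normal-order each power $(DL)^i$, moving all $L$'s to the left of all $D$'s, so that the hypothesis $D^{m}p=0$ can be used.

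Using $DL=LD+1$ together with $DL^{j+1}-L^{j+1}D=(j+1)L^{j}$ (both immediate from the relation $DL-LD=\mathrm{id}$ of the preceding lemma), I would prove by induction on $i$ that
$$(DL)^{i}=\sum_{j=0}^{i}{i+1\brace j+1}L^{j}D^{j},$$
where ${n\brace k}$ are the Stirling numbers of the second kind. The inductive step multiplies on the left by $DL$, rewrites $D\,L^{j+1}D^{j}=L^{j+1}D^{j+1}+(j+1)L^{j}D^{j}$, reindexes, and recognizes the recurrence ${n+2\brace j+1}=(j+1){n+1\brace j+1}+{n+1\brace j}$. This is the only genuine computation, and it is short.

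With both reductions in hand I would apply the operator to a $p$ with $D^{m}p=0$. Substituting the expansion and exchanging the order of summation gives
$$D\,D^{-1}p=\frac{1}{m!}\sum_{j\ge0}\Big(\sum_{i}(-1)^{i+1}{m+1\brack i+1}{i+1\brace j+1}\Big)L^{j}D^{j}p,$$
and because $D^{j}p=0$ for $j\ge m$ only the indices $0\le j\le m-1$ contribute. Writing $a=i+1$ and $b=j+1$, the inner coefficient is $\sum_{a\ge2}(-1)^{a}{m+1\brack a}{a\brace b}$, which I would evaluate through the Stirling orthogonality relation
$$\sum_{a}(-1)^{(m+1)-a}{m+1\brack a}{a\brace b}=\delta_{m+1,b}.$$

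The subtle point — and the step I expect to be the main obstacle — is the bookkeeping at the boundary of the summation. Our sum runs only over $a\ge2$ (since $i\ge1$), so it is the full orthogonality sum, which equals $(-1)^{m+1}\delta_{m+1,b}$, minus its $a=1$ term $(-1){m+1\brack1}{1\brace b}=-m!\,\delta_{b,1}$. Hence the inner coefficient equals $(-1)^{m+1}\delta_{m+1,b}+m!\,\delta_{b,1}$, that is $(-1)^{m+1}\delta_{m,j}+m!\,\delta_{j,0}$; for $0\le j\le m-1$ this is $m!$ when $j=0$ and $0$ otherwise. Dividing by $m!$ therefore leaves exactly $L^{0}D^{0}p=p$, which establishes $D\,D^{-1}p=p$ and hence that $D^{-1}$ is a right inverse of $D$ on $\ker D^{m}$. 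The content of the lemma is thus precisely the duality between the first- and second-kind Stirling numbers, with the $m!$ produced by the missing $a=1$ boundary term.
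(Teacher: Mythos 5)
Your proof is correct, and it takes a genuinely different route from the paper's. The paper argues by induction on $m$: from the relation $D^mL-LD^m=mD^{m-1}$ it deduces that $q=DLp-mp$ satisfies $D^{m-1}q=0$, applies the induction hypothesis to $q$, and reassembles the stated formula using only the defining recurrence ${n+1\brack k+1}=n{n\brack k+1}+{n\brack k}$ of the first-kind Stirling numbers. You instead work at the operator level: the collapse $D(LD)^{i-1}L=(DL)^i$ is right; your normal-ordering identity $(DL)^i=\sum_{j=0}^i{i+1\brace j+1}L^jD^j$ is right (the induction via $DL^{j+1}D^j=L^{j+1}D^{j+1}+(j+1)L^jD^j$ and the second-kind recurrence is exactly what is needed); and your boundary bookkeeping in the orthogonality step is right: the missing $a=1$ term produces the $m!\,\delta_{j,0}$, and the $(-1)^{m+1}\delta_{m,j}$ term is harmless precisely because $D^mp=0$ kills $j=m$. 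In fact your computation proves the cleaner operator identity $DD^{-1}=\mathrm{id}+\frac{(-1)^{m+1}}{m!}\,L^mD^m$ on all of $K[a_0,a_1,\dots]$, from which the lemma is immediate; it also explains conceptually why first-kind Stirling numbers appear in $D^{-1}$ at all: they are forced, by Stirling duality, to invert the second-kind numbers generated by normal-ordering powers of $DL$. What the paper's route buys in exchange is self-containedness: it uses nothing about Stirling numbers beyond the recurrence stated just before the lemma, whereas your argument invokes the classical inversion relation $\sum_a(-1)^{(m+1)-a}{m+1\brack a}{a\brace b}=\delta_{m+1,b}$, which is standard but would need to be cited or proved to make the write-up complete.
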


\begin{proof} If $Dp=0$ then by lemma \ref{comm}, it follows that $DLp=p$. So the claim is true for $m=1$. If $m>1$ and $D^mp=0$, then by lemma \ref{comm}, $D^mLp=mD^{m-1}p$ and hence 
$q=DL(p)-mp$ has the property that $D^{m-1}q=0$. Now by the induction hypothesis
$$D\left(\frac{1}{(m-1)!}\sum_{i=1}^{m-1} (-1)^{i+1}{m\brack i+1}(LD)^{i-1}L\right)q=q.$$
If we let $q=DL(p)-mp$, we get
$$D\left(\frac{1}{(m-1)!}\left( \sum_{i=1}^{m-1} (-1)^{i+1}{m\brack i+1} (LD)^i L-\sum_{i=1}^{m-1} (-1)^{i+1}m{m\brack i+1}(LD)^{i-1}L \right)\right) p=DL(p)-mp$$
Since ${m\brack m+1}=0$, we rewrite it as
$$D\left(\frac{1}{(m-1)!}\left( -\sum_{i=2}^{m} (-1)^{i+1}{m\brack i} (LD)^{i-1} L- \sum_{i=1}^{m} (-1)^{i+1}m{m\brack i+1}(LD)^{i-1}L \right)\right) p=DL(p)-mp$$

Since ${m\brack 1}=(m-1)!$, using the recursive relation we get, 

$$D\left(L-\frac{1}{(m-1)!}\sum_{i=1}^m (-1)^{i+1}{m+1\brack i}(LD)^{i-1}L\right) (p)=DL(p)-mp.$$
This simplifies to 
$$D\left(\frac{1}{m!}\sum_{i=1}^m (-1)^{i+1}{m+1\brack i}(LD)^{i-1}L\right) (p)=p.$$

\end{proof}
\begin{remark} Since any polynomial $p\in K[a_0,a_1,\dots]$ has the property that $D^{w+1}p=0$ where $w$ is the maximum weight of $p$, hence the previous lemma defines an inverse for $D$. This inverse is not unique. Using the basis $U_{k_1,\dots,k_d}$ another inverse can be simply defined by $D^{-1}(U_{k_1,\dots, k_d})=U_{k_1+1,k_2,\dots, k_d}$.
\end{remark}

\begin{theorem}\label{comp2}
If $q_0$ is a homogeneous $U$ invariant of degree $d$ and weight $w$, for $i=1,\dots, w-1$ define
$$q_i(a)=q_{i-1}(a)-D^{-1}([q_{i-1}(J a)-q_{i-1}(a)]_{w-1-i}).$$
where $[q]_g$ denotes the weight $g$ component of $q$.
Then $q_{w-1}$ is a $J$-completion of $q_0$.
\end{theorem}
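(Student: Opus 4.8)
The plan is to measure how badly each $q_i$ fails to be $J$-invariant and to show that a single subtraction at each stage kills exactly the top remaining weight of that failure. Define the defect $\delta_i := q_i(Ja)-q_i(a)$; the goal is to prove $\delta_{w-1}=0$, which is precisely the assertion that $q_{w-1}$ is $J$-invariant. The central claim I would establish, by induction on $i$, is that every weight-homogeneous component of $\delta_i$ has weight at most $w-2-i$. Two facts drive everything: the right-inverse property $D D^{-1}=\mathrm{id}$ from the preceding lemma, and the observation (extracted from the proof of Lemma \ref{JJ}) that for any weight-homogeneous $f$ of weight $v$ the polynomial $f(Ja)-f(a)$ has weights $\le v-1$ with weight-$(v-1)$ component exactly $D(f)$.

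For the base case, $q_0$ is weight-homogeneous of weight $w$, so by that same analysis $q_0(Ja)$ has weight-$w$ part $q_0$ and weight-$(w-1)$ part $D(q_0)$. Since $q_0$ is a $U$-invariant, $D(q_0)=0$, so both the weight-$w$ and weight-$(w-1)$ parts of $\delta_0$ vanish and $\mathrm{wt}(\delta_0)\le w-2$, matching $w-2-0$. For the inductive step, assume $\mathrm{wt}(\delta_{i-1})\le w-1-i$, so that $[\delta_{i-1}]_{w-1-i}$ is the top component of $\delta_{i-1}$. Put $r:=D^{-1}\big([\delta_{i-1}]_{w-1-i}\big)$; since $D^{-1}$ raises weight by exactly one and preserves degree, $r$ is weight-homogeneous of weight $w-i$ and of degree $d$. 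Then $q_i=q_{i-1}-r$, hence $\delta_i=\delta_{i-1}-\big(r(Ja)-r(a)\big)$. Applying the weight fact to $r$, the term $r(Ja)-r(a)$ has weights $\le w-1-i$ with top component $D(r)=D\big(D^{-1}[\delta_{i-1}]_{w-1-i}\big)=[\delta_{i-1}]_{w-1-i}$. The weight-$(w-1-i)$ parts therefore cancel, giving $\mathrm{wt}(\delta_i)\le w-2-i$ and closing the induction.

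Setting $i=w-1$ yields $\mathrm{wt}(\delta_{w-1})\le -1$, and since weights are nonnegative this forces $\delta_{w-1}=0$; thus $q_{w-1}$ is $J$-invariant. To see that it is genuinely a $J$-completion of $q_0$, I would note that each correction $r$ removed at step $i$ is weight-homogeneous of weight $w-i<w$ and of degree $d$, so $q_{w-1}$ has the same degree $d$ as $q_0$, shares its weight-$w$ component with $q_0$ (which equals $q_0$, as $q_0$ is itself of pure weight $w$), and differs from $q_0$ only by terms of strictly smaller weight.

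The step I expect to be the crux is the verification that $D^{-1}$ raises weight by exactly one and that $D D^{-1}=\mathrm{id}$ on the specific weight-homogeneous piece $[\delta_{i-1}]_{w-1-i}$: these are what make each single subtraction cancel the top weight of the current defect \emph{exactly} rather than merely decrease it, and hence what guarantee termination after exactly $w-1$ steps. Both follow from the explicit formula for $D^{-1}$ (the operator $L$ raises weight by one while $LD$ preserves it, so every summand $(LD)^{i-1}L$ raises weight by one and preserves degree) together with the right-inverse lemma applied to the polynomial $[\delta_{i-1}]_{w-1-i}$, for which $D^{m}$ vanishes for large $m$.
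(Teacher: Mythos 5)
Your proposal is correct and takes essentially the same route as the paper's own proof: an induction on $i$ showing that the defect $q_i(Ja)-q_i(a)$ has all weights at most $w-2-i$, driven by the expansion $p(Ja)=p(a)+Dp(a)+(\text{terms of lower weight})$ extracted from the proof of Lemma \ref{JJ} together with the right-inverse property $DD^{-1}=\mathrm{id}$. The only differences are that you make explicit two points the paper leaves implicit, namely that $D^{-1}$ raises weight by exactly one (so each correction is weight-homogeneous of weight $w-i$) and that the accumulated corrections, having weights strictly below $w$, leave the top weight component equal to $q_0$, so $q_{w-1}$ is genuinely a $J$-completion.
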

\begin{proof}
Inductively, we show that the highest weight of $q_i(Ja)-q_i(a)$ is $w-i-2$. This shows that for $i=w-1$, $q_i(Ja)=q_i(a)$. In the proof of lemma \ref{JJ}, we showed that if $p$ is of weight at most $w$ then 
$$p(Ja)=p(a)+Dp(a)+\mbox{terms of weight less than or equal}\:\: w-2$$
So the claim is clear for $q_0$, since $Dq_0=0$. Now by the induction hypothesis
$$q_{i-1}(Ja)=q_{i-1}(a)+[q_{i-1}(Ja)-q_{i-1}(a)]_{w-1-i}+\mbox{terms of weight at most}\:\: w-2-i$$
And by the formula above about the relation between $D$ and $J$ and the fact that $D^{-1}$ is an inverse for $D$ we have
$$D^{-1}([q_{i-1}(Ja)-q_{i-1}(a)]_{w-1-i})(Ja)=$$
$$D^{-1}([q_{i-1}(Ja)-q_{i-1}]_{w-1-i}]+[q_{i-1}(Ja)-q_{i-1}(a)]_{w-1-i}+\mbox{terms of weights at most}\:\:\: w-2-i$$
If we subtract these two equations we find that
$$q_i(Ja)=q_i(a)+\mbox{terms of weights at most}\:\: w-2-i$$
and the theorem is proved.
\end{proof}

\begin{remark} Both methods of theorem \ref{comp1} and theorem \ref{comp2} given a homogeneous $U$ invariant $q_0$ of degree $d$ and weight $w$, produce homogeneous $J$-invariants of degree $d$ and top weight component $q_0$, however if $q_0$ is in variables $a_0,\dots, a_n$, the completion might have variables in $a_0,\dots, a_w$. So if $w>n$, then it is plausible to have new variables in the completion. For example, we apply these methods to the $U$ invariant of degree $4$ and weight $6$
$$q_0(a_0,a_1,a_2,a_3)=-3a_2^2a_1^2+6a_3a_1^3+8a_2^3a_0-18a_3a_2a_1a_0+9a_3^2a_0^2$$
of Example \ref{ex5}.
\\
If we want to use theorem \ref{comp1}, we need to express $q_0$ in terms of the basis $U_{k_1,\dots, k_d}$. It is as follows.
$$q_0=2U_{0,0,2,0}-3U_{0,1,0,1}-6U_{0,3,0,0}.$$
Now an application of theorem \ref{comp1} yields the following $J$-completion.
$$q_0+(6U_{1,2,0,0}+3U_{1,0,0,1})-(2U_{1,0,1,0}+6U_{2,1,0,0})+(6U_{3,0,0,0}+3U_{1,1,0,0})-4U_{2,0,0,0}.$$
After computing these basis terms by Mathematica, we discover that this completion has variables, $a_0,\dots, a_5$.
If we use instead, the algorithm in theorem \ref{comp2}, we get the following $J$-completion that has variables $a_0,\dots,a_4$.
$$q_0+(3a_2a_1^3-6a_2^2a_1a_0-9a_3a_1^2a_0+18 a_3a_2a_0^2)$$
$$+\frac{1}{8}(-7a_1^4-12a_2a_1^2a_0+30 a_2^2a_0^2+36a_3a_1a_0^2-12a_4a_0^3)
-\frac{3}{2}(-a_1^3a_0+2a_3a_0^3)
-\frac{5}{16}(3a_1^2a_0^2+2a_2a_0^3)$$
$$+\frac{5}{16}a_1a_0^3.$$
\end{remark}
It is interesting to see if there is a method for $J$-completion, that does not introduce new variables. Indeed, this is possible. 
\begin{lemma}
Let $N=J-I$ and $J'=\exp(N)$ then $J=S^{-1}J'S$, where $S$ is a lower triangular matrix with $S_{ii}=1$ and 
$$S_{ij}=\frac{1}{(i-1)!}\sum_{k=0}^j (-1)^k{j-1\choose k}{(j-1-k)}^{i-1}$$
\end{lemma}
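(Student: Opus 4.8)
The plan is to prove the equivalent matrix identity $J'S=SJ$: since $S$ is lower triangular with unit diagonal it is automatically invertible, so $J'S=SJ$ immediately yields $J=S^{-1}J'S$. First I would record the two matrices explicitly. Writing $N=J-I$ for the down-shift with $Ne_j=e_{j+1}$ (so $N^ke_j=e_{j+k}$ and $N^n=0$), we have $J=I+N$, hence $Je_j=e_j+e_{j+1}$ for $j<n$ and $Je_n=e_n$; and $J'=\exp(N)=\sum_{k\ge 0}N^k/k!$ has entries $J'_{ij}=1/(i-j)!$ for $i\ge j$ and $0$ otherwise.

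Reading $J'S=SJ$ column by column is the cleanest route. Let $s_j$ denote the $j$-th column of $S$. Comparing the $j$-th columns of $SJ$ and of $J'S$ turns the identity into the recursion $(J'-I)s_j=s_{j+1}$ for $j=1,\dots,n-1$, together with the boundary relation $(J'-I)s_n=0$ coming from $Je_n=e_n$. The explicit formula gives $s_1=e_1$ (only the $k=0$ term survives, and $0^{i-1}=\delta_{i1}$), so solving the recursion yields $s_j=(J'-I)^{\,j-1}e_1$. The boundary relation then holds automatically, because $J'-I=\exp(N)-I=N\,u$ with $u=I+N/2+\cdots$ invertible and commuting with $N$, whence $(J'-I)^n=N^n u^n=0$. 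Thus it remains only to check that the stated formula for $S$ is exactly the matrix whose $j$-th column is $(J'-I)^{\,j-1}e_1$.

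This last identification is where the Stirling numbers enter, and I expect it to be the one genuinely non-routine step. The coefficient of $e_i$ in $(J'-I)^{\,m}e_1$ is the coefficient of $N^{\,i-1}$ in $(\exp N-I)^m$, and the exponential generating function $(e^x-1)^m/m!=\sum_{p}{p\brace m}x^p/p!$ for the Stirling numbers of the second kind shows this coefficient equals $\tfrac{m!}{(i-1)!}{i-1\brace m}$. Taking $m=j-1$ gives $S_{ij}=\tfrac{(j-1)!}{(i-1)!}{i-1\brace j-1}$, and one then recovers precisely the stated expression from the closed form ${n\brace k}=\tfrac1{k!}\sum_{r=0}^{k}(-1)^r\binom{k}{r}(k-r)^n$. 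The main obstacle is therefore just recognizing the Stirling-number structure behind the formula for $S$; once that is in hand, every remaining step is a standard manipulation.

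Alternatively, one may verify $J'S=SJ$ purely entrywise. After the telescoping cancellation of the $l=i$ term, the identity $(SJ)_{ij}=(J'S)_{ij}$ reduces to $S_{i,j+1}=\sum_{l=j}^{i-1}S_{lj}/(i-l)!$, which, via $S_{ij}=\tfrac{(j-1)!}{(i-1)!}{i-1\brace j-1}$ and the convolution identity ${a+1\brace b+1}=\sum_{c=b}^{a}\binom{a}{c}{c\brace b}$, collapses to the defining recurrence ${a+1\brace b+1}=(b+1){a\brace b+1}+{a\brace b}$. I would lead with the column-recursion argument, as it is the more transparent of the two to write out in full.
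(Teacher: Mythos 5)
Your proof is correct and takes essentially the same route as the paper: both arguments come down to identifying the $j$-th column of $S$ with $(J'-I)^{j-1}e_1$ (the paper by constructing the basis $e_j'=(N')^{j-1}e_1$ on which $N'=J'-I$ acts as the shift, you by verifying the column recursion $(J'-I)s_j=s_{j+1}$ starting from $s_1=e_1$), and then recognizing the entries as Stirling numbers of the second kind. Your generating-function extraction of the coefficient of $N^{i-1}$ in $(\exp N-I)^{j-1}$ is the same calculation as the paper's multinomial expansion followed by the inclusion--exclusion count of surjections, just packaged differently.
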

\begin{proof}
If $e_1,\dots, e_n$ is the standard basis for $K^n$, then $Ne_i=e_{i+1}$, with $e_{n+1}$ is assumed to be zero. Now let $N'=J'-I$, we compute $(N')^ke_1$ for $k=1,\dots, {n-1}$. Since $N^m=0$ for $m\ge n$, hence $(N')^{n-1}=N^{n-1}$ and hence $(N')^{n-1}e_1=e_n\ne 0$. If we let $e_i'=(N')^{i-1}e_1$ then it is easy to see that they form a new basis for $K^n$ and $N'e_i'=e_{i+1}'$, with $e_{n+1}'$ is assumed to be zero. This shows that $J$ and $J'$ are similar. To find the matrix $S$, we need to compute $e_j'$ in terms of the standard basis. The coefficient of $e_i$ in the expansion of $e_j'$ s
$$S_{ij}=\sum_{\substack{k_1+\dots+k_{j-1}=i-1\\k_1,\dots, k_{j-1}>0}} \frac{1}{k_1!\dots k_{j-1}!} .$$
Note that when $i=j$, then $S_{ii}=1$. Also $(i-1)!S_{ij}$ is the number of ways to partition a set with $i-1$ elements into $j-1$ non-empty labeled subsets. By inclusion-exclusion principle this number is 
$$\sum_{k=0}^{j-1} (-1)^k {j-1\choose k} (j-1-k)^{i-1}.$$
Finally we remark that $S_{ij}=\frac{(j-1)!}{(i-1)!}S(i-1,j-1)$, where $S(i-1,j-1)$ is the Stirling's number of the second kind.
\end{proof}
\begin{lemma}\label{UJ}
 With the notation of the previous lemma, a form is a $U$-invariant if and only if it a $J'$-invariant. Therefore $p(a)$ is $U$-invariant if and only if $p(Sa)$ is a $J$-invariant, i.e., a semi-invariant.
\end{lemma}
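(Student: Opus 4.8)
The plan is to exploit the two structural facts already in place: that $J'=\exp(N)$, and that $D$ is precisely the infinitesimal generator of the one-parameter group of substitutions $a\mapsto\exp(tN)a$. For the operator $\Phi_t$ given by $\Phi_t(p)(a)=p(\exp(tN)a)$ one has $\Phi_0=\mathrm{id}$ and $\Phi_s\Phi_t=\Phi_{s+t}$, and, since $(Na)_i=a_{i-1}$, the chain rule gives $\frac{d}{dt}\Phi_t(p)|_{t=0}=\sum_i a_{i-1}\frac{\partial p}{\partial a_i}=Dp$ (the same computation underlying the identity $D(p(J_na))=(Dp)(J_na)$). The group law then yields $\frac{d}{dt}\Phi_t(p)=D\Phi_t(p)$, hence $\frac{d^k}{dt^k}\Phi_t(p)=D^k\Phi_t(p)$. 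Because $\exp(tN)=\sum_{j\ge 0}\frac{t^j}{j!}N^j$ is a finite sum ($N$ being nilpotent), $\Phi_t(p)$ is a polynomial in $t$ and therefore equals its finite Taylor expansion, whose coefficients are $\frac{1}{k!}D^kp$. Evaluating at $t=1$ gives the key identity
$$p(J'a)=\sum_{k\ge 0}\frac{1}{k!}D^kp=\exp(D)(p),$$
which makes sense as $K$ has characteristic zero and $D$ is locally nilpotent on $K[a_0,a_1,\dots]$.

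Given this, the first assertion follows by an operator factorization. A polynomial $p$ is $J'$-invariant exactly when $(\exp(D)-I)p=0$. Writing $\exp(D)-I=D\,h(D)$ with $h(D)=\sum_{j\ge 0}\frac{1}{(j+1)!}D^j$, and using that $D$ and $h(D)$ commute, $J'$-invariance is equivalent to $h(D)(Dp)=0$. Now $h(D)=I+\tfrac{1}{2}D+\cdots$ is the identity plus a locally nilpotent operator (recall $D^{w+1}p=0$ once $w$ exceeds the weight of $p$), so $h(D)$ is invertible on $K[a_0,a_1,\dots]$; therefore $h(D)(Dp)=0$ forces $Dp=0$, i.e.\ $p$ is a $U$-invariant. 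The converse is immediate: if $Dp=0$ then $D^kp=0$ for all $k\ge 1$, so the identity above gives $p(J'a)=p(a)$.

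The second assertion is a change-of-variables argument using the previous lemma, exactly in the spirit of the similar-matrices observation following Theorem \ref{thm2}. From $J=S^{-1}J'S$ we get $SJ=J'S$, so for $q(a)=p(Sa)$ we have $q(Ja)=p(SJa)=p(J'Sa)$. If $p$ is $J'$-invariant then $p(J'Sa)=p(Sa)=q(a)$, so $q$ is $J$-invariant; conversely, if $q(Ja)=q(a)$ then $p(J'Sa)=p(Sa)$ for all $a$, and since $a\mapsto Sa$ is an invertible change of variables, substituting $b=Sa$ gives $p(J'b)=p(b)$ for all $b$, i.e.\ $p$ is $J'$-invariant. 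Combining with the first assertion, $p$ is a $U$-invariant if and only if $p(Sa)$ is $J$-invariant, that is, a semi-invariant of $J_{n,\lambda}$ as observed in the introduction.

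The main obstacle is the non-trivial implication $J'$-invariant $\Rightarrow$ $U$-invariant; the clean route is the exponential identity together with the invertibility of $h(D)$, both of which genuinely require characteristic zero. The reverse implication and the transfer along $S$ are then formal, so the only real work is establishing $p(J'a)=\exp(D)(p)$ and the local-nilpotence bookkeeping that makes $h(D)$ invertible.
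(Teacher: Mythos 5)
Your proof is correct and takes essentially the same route as the paper: the key identity $p(J'a)=\exp(D)(p)$ (which the paper justifies by ``Taylor's expansion or checking it for a monomial''), the equivalence $Dp=0\Leftrightarrow\exp(D)p=p$ for the locally nilpotent operator $D$ (which the paper calls standard and you prove via the factorization $\exp(D)-I=D\,h(D)$ with $h(D)$ invertible), and the transfer along the similarity $J=S^{-1}J'S$ exactly as in the paper's observation at the end of Section 2. The only difference is that you supply in full the details the paper leaves to the reader.
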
 
\begin{proof}
This follows from two facts. First, for the nilpotent operator $D$, one knows that $Dp=0$ if and only if $\exp(D)p=p$ and second, 
$$\exp(D)p(a)=p(J'a).$$
First assertion is standard and the second follows from either the Taylor's expansion or checking it for a monomial. The relation given in the lemma between $U$-invariants and $J$-invariants follows from the discussion at the end of section 2, about semi-invariants of two similar matrices.
\end{proof}
\begin{remark} Lemma \ref{UJ} gives a method of $J$-completion without introducing new variables. This is because if $p(a)$ is an isobaric $U$-invariant then $p(Sa)$ is a $J$-invariant that is of the form $p(a)$ plus terms of lower weights. If we apply this method to
$$q_0(a_0,a_1,a_2,a_3)=-3a_2^2a_1^2+6a_3a_1^3+8a_2^3a_0-18a_3a_2a_1a_0+9a_3^2a_0^2,$$
we get the following $J$-completion.
\begin{eqnarray}\nonumber
q_0-\frac{1}{108}\hspace{-0.3cm}&(&\hspace{-0.3cm}(-1944a_0^2a_2a_3+648a_0a_1a_2^2+972a_0a_1^2a_3-324a_1^3a_2)\\\nonumber
&+&\hspace{-0.3cm}(-540a_0^2a_2^2-324a_0^2a_1a_3+216a_0a_1^2a_2+81a_1^4)+(108a_0^2a_1a_2-162a_0a_1^3)\\\nonumber
&+&\hspace{-0.3cm}(117a_0^2a_1^2-72a_0^3a_2)+(-36a_0^3a_1)+(4a_0^4))\nonumber
\end{eqnarray}
We were not able to find a general method of $J$-completion that gives the form in Example \ref{ex5}.

\end{remark}

{\textbf{Acknowledgment.}} The authors wish to thank heartily Claudio Procesi, whose kind response to some of our questions, improved the presentation and quality of the paper a lot.




\begin{thebibliography}{9}
\bibitem{Bo} Boole, G., Expositions of a general theory of linear transformations, Camb. Math. J., {\textbf{3}} (1841-2), 1-20, 106-119.
\bibitem{CD} Carell, J. B and Dieudonn\'e, J., Invariant theory- Old and new, Academic Press, New york, (1971).
\bibitem{Ca} Cayley, A., On the theory of linear transformations, Camb. Math. J. {\textbf{4}} (1845) 193-209.
 \bibitem{H}
 Hilbert, D., \"Uber die vollen invarientensysteme (On full invariant systems), Math Annalen, 42(3), (1893), 313-370.
\bibitem{Hil} Hilbert, D. ,Theory of algebraic invariants. Translated from the German and with a preface by R.C. Laubenbacher, Edited and with an introduction by B. Sturmfels. Cambridge: Cambridge University Press, 1993.

\bibitem{KP} Kraft, H., and Procesi, C. Perpetuants: A Lost Treasure, IMRN, (2020) 1-36.

\bibitem{L} Lefschetz, S., Algebraic geometry, Princeton University Press, 1953.
\bibitem{N} Nagata, M., On the 14th problem of Hilbert, Am. J. Math, {\textbf{81}}, (1959) 766-772.
\bibitem{P} Popov, V. L., Hilbert's theorem on invariants, Soviet Math. Doklady, {\textbf{249}}, 551-555.
\bibitem{PP} Procesi, C., Lie Groups—An Approach through Invariants and Representations. Universi-
text. New York: Springer, 2007.
\bibitem{S} Stroh, E. :\"Uber die symbolische Darstellung der Grundsyzyganten einer bin\"aren Form sechster Ordung und eine Erweiterung der Symbolik von Clebsch. Math Ann. 36, no. 2 (1890) 262-303.
\bibitem{Syl} Sylvester, J.J., On subvariants, i.e. semi-invariants to binary quantic of an unlimited order, Am. J. Math., {\textbf{5}} no. 1-4 (1882) 79-136.
 \bibitem{book}
Sturmfels, B., Algorithms in invariant theory, Springer, Wien New York, second edition, 2008. 
\bibitem{W} Weitzenb\"ock, R., \"Uber die Invarianten von linearen Gruppen, Acta Math. 58 (1932), 230–250.
 \end{thebibliography}
\end{document}